\providecommand\@dotsep{5}
\def\listtodoname{List of Todos}
\def\listoftodos{\@starttoc{tdo}\listtodoname}
\numberwithin{equation}{section}
\newcommand{\Om} {\Omega}
\newtheorem{Theorem}{Theorem}[section]
\newtheorem{Lemma}[Theorem]{Lemma}
\newtheorem{Corollary}[Theorem]{Corollary}
\newtheorem{Remark}[Theorem]{Remark}
\newtheorem{Definition}[Theorem]{Definition}
\begin{document}

\title[Weighted anisotropic Sobolev inequality...]
{Weighted anisotropic Sobolev inequality with extremal and associated singular problems}
\author{Kaushik Bal and Prashanta Garain}

\address[Kaushik Bal ]
{\newline\indent Department of Mathematics and Statistics
\newline\indent
Indian Institute of Technology Kanpur
\newline\indent
Kanpur-208016, Uttar Pradesh, India
\newline\indent
Email: {\tt kaushik@iitk.ac.in} }

\address[Prashanta Garain ]
{\newline\indent Department of Mathematics
	\newline\indent
Uppsala University
	\newline\indent
S-751 06 Uppsala, Sweden
\newline\indent
Email: {\tt pgarain92@gmail.com} }

\pretolerance10000

\begin{abstract}
For a given Finsler-Minkowski norm $\mathcal{F}$ in $\mathbb{R}^N$ and a bounded smooth domain $\Omega\subset\mathbb{R}^N$ $\big(N\geq 2\big)$, we establish the following weighted anisotropic Sobolev inequality  
$$
S\left(\int_{\Omega}|u|^q f\,dx\right)^\frac{1}{q}\leq\left(\int_{\Omega}\mathcal{F}(\nabla u)^p w\,dx\right)^\frac{1}{p},\quad\forall\,u\in W_0^{1,p}(\Omega,w)\leqno{\mathcal{(P)}}
$$
where $W_0^{1,p}(\Omega,w)$ is the weighted Sobolev space under a class of $p$-admissible weights $w$, where $f$ is some nonnegative integrable function in $\Omega$. We discuss the case $0<q<1$ and observe that
$$
\mu(\Omega):=\inf_{u\in W_{0}^{1,p}(\Omega,w)}\Bigg\{\int_{\Omega}\mathcal{F}(\nabla u)^p w\,dx:\int_{\Omega}|u|^{q}f\,dx=1\Bigg\}\leqno{\mathcal{(Q)}}
$$
is associated with singular weighted anisotropic $p$-Laplace equations. To this end, we also study existence and regularity properties of solutions for weighted anisotropic $p$-Laplace equations under the mixed and exponential singularities.
\end{abstract}

\subjclass[2010]{35A23, 35B65, 35J92, 35J75}
\keywords{Weighted anisotropic $p$-Laplace operator, Sobolev inequality, Extremal, Quasilinear singular problem, Existence and regularity, $p$-admissible weight.}

\maketitle

\section{Introduction}
In this article, we establish the following weighted {anisotropic} Sobolev inequality,
$$
S\left(\int_{\Omega}|u|^q f\,dx\right)^\frac{1}{q}\leq\left(\int_{\Omega}\mathcal{F}(\nabla u)^p w\,dx\right)^\frac{1}{p},\quad\forall\,u\in W_0^{1,p}(\Omega,w),\leqno{\mathcal{(P)}}
$$
where $S$ is the Sobolev constant, $0<q<1<p<\infty$, $\Omega\subset\mathbb{R}^N$ $\big(N\geq 2\big)$ is a bounded smooth domain. Here $\mathcal{F}:\mathbb{R}^N\to[0,\infty)$ is a Finsler-Minkowski norm, i.e. $\mathcal{F}$ satisfies the hypothesis from $(H0)-(H4)$ given by
\begin{enumerate}
\item[(H0)] $\mathcal{F}(x)\geq 0$, for every $x\in\mathbb{R}^N$.
\item[(H1)] $\mathcal{F}(x)=0$, if and only if $x=0$.
\item[(H2)] $\mathcal{F}(tx)=|t|\mathcal{F}(x)$, for every $x\in\mathbb{R}^N$ and $t\in\mathbb{R}$.
\item[(H3)] $\mathcal{F}\in C^{\infty}\Big(\mathbb{R}^N\setminus\{0\}\Big)$.
\item[(H4)] the Hessian matrix $\nabla_{\eta}^2\Big(\frac{\mathcal{F}^2}{2}\Big)(x)$ is positive definite for all $x\in\mathbb{R}^N\setminus\{0\}$.
\end{enumerate} 

We assume the weight function $w$ in a class of $p$-admissible weights to be discussed in Section $2$ and $f\in L^m(\Omega)\setminus\{0\}$ is some nonnegative function. Our main emphasis is the case of $0<q<1$ in $\mathcal{(P)}$ and we observe that
 $$
\mu(\Omega):=\inf_{u\in W_{0}^{1,p}(\Omega,w)}\Bigg\{\int_{\Omega}\mathcal{F}(\nabla u)^p w\,dx:\int_{\Omega}|u|^{q}f\,dx=1\Bigg\},\leqno{\mathcal{(Q)}}
 $$
is associated with the following type of weighted singular anisotropic $p$-Laplace equation:
 $$
 -\mathcal{F}_{p,w}u=\frac{f(x)}{u^\delta}+\frac{g(x)}{u^\gamma}\text{ in }\Omega,\,u>0\text{ in }\Omega,\,u=0\text{ on }\partial\Omega\leqno{\mathcal{(S)}}
 $$
where $0<\delta,\gamma<1$, $(f,g)\neq(0,0)\in L^m(\Omega)$ are nonnegative functions. Here, 
\begin{equation}\label{Finsdef}
\mathcal{F}_{p,w}u:=\text{div}\left(w\mathcal{F}(\nabla u)^{p-1}\nabla_{\eta}\mathcal{F}(\nabla u)\right),
\end{equation}
is the weighted anisotropic $p$-Laplace operator, where $\nabla_{\eta}$ denotes the gradient with respect to $\eta$. Further, we prove existence and regularity results for the exponential singular problem,
$$
 -\mathcal{F}_{p,w}u=h(x)e^\frac{1}{u}\text{ in }\Omega,\,u>0\text{ in }\Omega,\,u=0\text{ on }\partial\Omega,\leqno{\mathcal{(R)}}
 $$
 where $h\in L^t(\Om)$ is nonnegative for some $t\geq 1$.\\
 Before proceeding further, let us discuss some examples of $\mathcal{F}$.\\
 \textbf{Examples:} Let $x=(x_1,x_2,\cdots,x_N)\in\mathbb{R}^N$. 
\begin{enumerate}
\item[(i)] Then for $t>1$, we define 
\begin{equation}\label{ex1}
\mathcal{F}_t(x):=\Big(\sum_{i=1}^{N}|x_i|^t\Big)^\frac{1}{t}.
\end{equation}
\item[(ii)] For $\lambda,\mu>0$, we define
\begin{equation}\label{ex2}
\mathcal{F}_{\lambda,\mu}(x):=\sqrt{\lambda\sqrt{\sum_{i=1}^{N}x_i^{4}}+\mu\sum_{i=1}^{N}x_i^{2}}.
\end{equation}
\end{enumerate}
The functions $\mathcal{F}_t, \mathcal{F}_{\lambda,\mu}:\mathbb{R}^N\to[0,\infty)$ given by \eqref{ex1} and \eqref{ex2} satisfies all the hypothesis from $(H0)-(H4)$, see Mezei-Vas \cite{MV}.

\begin{Remark}\label{exrmk1}
For $i=1,2$, if $\lambda_i,\mu_i$ are positive real numbers such that $\frac{\lambda_1}{\mu_1}\neq\frac{\lambda_2}{\mu_2}$, then $\mathcal{F}_{\lambda_1,\mu_1}$ and $\mathcal{F}_{\lambda_2,\mu_2}$ given by \eqref{ex2} defines two non-isometric norms in $\mathbb{R}^N$.
\end{Remark}

\begin{Remark}\label{hypormk2}
Since all norms in $\mathbb{R}^N$ are equivalent, there exist positive constants $C_1, C_2$ such that 
$$
C_1|x|\leq \mathcal{F}(x)\leq C_2|x|,\quad\forall \,x\in\mathbb{R}^N.
$$
\end{Remark}

The dual $\mathcal{F}_0:\mathbb{R}^N\to[0,\infty)$ of $\mathcal{F}$ is defined by 
\begin{equation}\label{dual}
\mathcal{F}_0(\xi):=\sup_{x\in\mathbb{R}^N\setminus\{0\}}\frac{\langle x,\xi\rangle}{\mathcal{F}(x)}.
\end{equation}
We refer to Bao-Chern-Shen \cite{BCS}, Xia \cite{Xiathesis} and Rockafellar \cite{Rbook}  for more details on $\mathcal{F}_0$.

It is easy to observe that, if  $\mathcal{F}=\mathcal{F}_{t}$ given by \eqref{ex1}, then
\begin{equation}\label{exlap}
\mathcal{F}_{p,w}u:=
\begin{cases}
\Delta_{p,w}u=\text{div}(w|\nabla u|^{p-2}\nabla u), ( \text{weighted }  $p$-Laplacian )\text{ if }t=2,\,1<p<\infty,\\
\mathcal{S}_{p,w}u=\text{div}\left(w\left|\frac{\partial u}{\partial x_i}\right|^{p-2}\nabla u\right), ( \text{weighted pseudo } $p$-Laplacian )\text{ if }t=p\in(1,\infty).
\end{cases}
\end{equation}

Therefore, $\mathcal{F}_{p,w}$ extends the weighted $p$-Laplace and weighted pseudo $p$-Laplace operators and thus a large class of weighted quasilinear equations is covered by $\mathcal{F}_{p,w}$.

Let us discuss some known results related to our present study. For $q>1$, Sobolev inequalities of the form $(\mathcal{P})$ are widely studied throughout the last three decade and there is a colossal amount of literaure available in this direction. We refer to Aubin \cite{Aubin}, Talenti \cite{Talenti}, \^{O}tani \cite{Otani}, Franzina-Lamberti \cite{Franzina}, Belloni-Kawohl \cite{BKmm}, Lindqvist \cite{PLin} and the references therein. For the class of Muckenhoupt weights \cite{Muc}, weighted Sobolev inequalities are established in Fabes-Kenig-Serapioni \cite{EFabes}, for a more general class of $p$-admissible weights, refer to Heinonen-Kilpel\"{a}ine-Martio \cite{Juh} and the references therein.

Although very less is known in the anisotropic case. In this context, recently Ciraolo-Figalli-Roncoroni \cite{CFR} proved a sharp version of $\mathcal{(P)}$, for a certain class of weight functions $w=f$ in a convex cone $\Omega$, with $1<p<N$, where the exponent $q>1$, depends on the weight function. Indeed, for $w=1$, the exponent $q$ is the critical Sobolev exponent in \cite{CFR}. We also refer to Belloni-Ferone-Kawohl \cite{BFK}, di Blasio-Pisante-Giovanni\cite{a2}, El Hamidi-Rakotoson \cite{a3}, Filippas-Moschini-Tertikas \cite{a1}, Dipierro-Poggesi-Valdinoci \cite{DPV21} and the references therein for related works.

When $0<q<1$, for any $1<p<\infty$, Anello-Faraci-Iannizzotto \cite{GFA} shown that
\begin{equation}\label{sinsb}
\nu(\Omega):=\inf_{0\neq u\in W_{0}^{1,p}(\Omega)}\Bigg\{\int_{\Omega}|\nabla u|^p\,dx:\int_{\Omega}|u|^{q}\,dx=1\Bigg\},
\end{equation}
is achieved at a solution $u_q\in W_{0}^{1,p}(\Omega)$ of the following singular $p$-Laplace equation
\begin{equation}\label{AFIeqn}
-\Delta_p u=\nu(\Omega)u^{q-1}\text{ in }\Omega,\,u>0\text{ in }\Omega,\,u=0\text{ on }\partial\Omega.
\end{equation}
Such results has further been extended to a class of Muckenhoupt weights by Bal-Garain \cite{BGaniso}, in the nonlocal case by Ercole-Pereira \cite{EP1}, subelliptic setting by Garain-Ukhlov \cite{GU}, see also Ercole-Pereira \cite{EP} for related results. When $w$ is a $p$-admissible weight, Hara \cite{Hara} studied weighted Sobolev inequalities of type $(\mathcal{P})$ in terms of the associated singular problem.

In this article, we provide sufficient conditions on the weight function $w$ (which may vanish or blow up near the origin, for example $w(x)=|x|^{\alpha}$, $\alpha\in\mathbb{R}$) and $f$ that guarantees the weighted Sobolev inequality $(\mathcal{P})$ and also provide the extremal function of the associated variational problem.

We observe that such extremals correspond to the associated singular weighted anisotropic $p$-Laplace equation. We found a class of $p$-admissible weights useful for our purpose, for which weighted singular problems are recently studied in Garain-Kinnunen \cite{Garainpadm, GKin}. We would like to mention that in the classical case without weights, singular $p$-Laplace equations is studied widely till date, refer to \cite{CRT, GRbook, Canino, GST, BG, arcoya, Merida, Garainaniso, DeCave, G, GM, Santos, Mirifixed, Mirivar, Oljama, Olsaim, Oljde, OrPet, BGejde} and the references therein. In contrast to these literature, singular anisotropic $p$-Laplace equations are very less understood. In this context, we refer to Biset-Mebrate-Mohammed \cite{BMM20}, Farkas-Winkert \cite{PF20}, Farkas-Fiscella-Winkert \cite{PF21}, Bal-Garain-Mukherjee \cite{BGM} and the references therein. Here we are also able to extend some previous results known in the unweighted case for $p\geq 2$ to the weighted case case with $1<p<2$ (see Remark \ref{thm1newrmk2} and \ref{thm2newrmk}).

The idea here stem from the work of Anello-Faraci-Iannizzotto \cite{GFA} that is based on the approximation approach, where it was important to know the existence of the associated singular $p$-Laplace equation \eqref{AFIeqn}. We obtain existence and regularity results in the weighted anisotropic setting, for the more general mixed singular problem $\mathcal{(S)}$ (Theorem \ref{thm1}-\ref{thm1new}) and also study the exponential singular problem $\mathcal{(R)}$ (Theorem \ref{ethm}-\ref{ethmnew}). To this end, we follow the approach from Boccardo-Orsina \cite{BocOr}, where to deal with the mixed problem $\mathcal{(S)}$, we need to estimate both the singularities $u^{-\delta}$ and $u^{-\gamma}$ simultaneously as in \cite{BGaniso}. On the otherhand, for the case of $e^\frac{1}{u}$ in the problem $\mathcal{(R)}$, the exponent in the singularity is arbitrarily large and thus in general solutions lie outside $W_0^{1,p}(\Om,w)$. Due to this reason, one has to describe the boundary condition appropriately as has been illustrated in \cite{PeSil, BocOr, LzMc}. Here, we tackle our situation by following the domain approximation technique from Perera-Silva \cite{PeSil}.

\subsection*{Organization of the paper:} In Section $2$, we discuss some preliminaries in our setting and state the main results. In Section $3$, we obtain several auxiliary results, that are crucial to study the mixed singular problem $\mathcal{(S)}$ and the weighted anisotropic Sobolev inequality $\mathcal{(P)}$. In Section $4$, we prove some auxiliary results related to the exponential singular problem $\mathcal{(R)}$ and finally, in Section $4$, we prove our main results.

\section{Preliminaries and main results}
Throughout the rest of the article, we assume $1<p<\infty$, unless otherwise mentioned. We say that a function $w$ belong to the class of $p$-admissible weights $W_p$, if $w\in L^1_{\mathrm{loc}}(\mathbb{R}^N)$ such that $0<w<\infty$ almost everywhere in $\mathbb{R}^N$ and satisfies the following conditions:
\begin{enumerate}
\item[(i)] for any ball $B$ in $\mathbb{R}^N$, there exists a positive constant $C_{\mu}$ such that 
$$
\mu(2B)\leq C_{\mu}\,\mu(B),
$$
where 
$$
\mu(E)=\int_{E}w\,dx
$$
for a measurable subset $E$ in $\mathbb{R}^N$ and $d\mu(x)=w(x)\,dx$, where $dx$ is the $N$-dimensional Lebesgue measure.
\item[(ii)] If $D$ is an open set and $\{\phi_i\}_{i\in\mathbb{N}}\subset C^{\infty}(D)$ is a sequence of functions such that 
$$
\int_{D}|\phi_i|^p\,d\mu\to 0\text{ and } \int_{D}|\nabla\phi_i-v|^p\,d\mu\to 0
$$
as $i\to\infty$, where $v$ is a vector valued measurable function in $L^p(D,w)$, then $v=0$.
\item[(iii)] There exist constants $\kappa>1$ and $C_1>0$ such that 
\begin{equation}\label{wp}
\left(\frac{1}{\mu(B)}\int_{B}|\phi|^{\kappa p}\,d\mu\right)^\frac{1}{\kappa p}\leq C_1 r \left(\frac{1}{\mu(B)}\int_{B}|\nabla\phi|^p\,d\mu\right)^\frac{1}{p},
\end{equation}
whenever $B=B(x_0,r)$ is a ball in $\mathbb{R}^N$ centered at $x_0$ with radius $r$ and $\phi\in C_{c}^\infty(B)$.
\item[(iv)] There exists a constant $C_2>0$ such that
\begin{equation}\label{wp1}
\int_{B}|\phi-\phi_B|^p\,d\mu\leq C_2 r^p\int_{B}|\nabla\phi|^p\,d\mu,
\end{equation}
whenever $B=B(x_0,r)$ is a ball in $\mathbb{R}^N$ and $\phi\in C^\infty(B)$ is bounded. Here
$$
\phi_B=\frac{1}{\mu(B)}\int_{B}\phi\,d\mu.
$$
\end{enumerate}
The conditions (i)-(iv) are important in the theory of weighted Sobolev spaces, one can refer to \cite{Juh} for more details. \\
\textbf{Examples:} Muckenhoupt weights $A_p$ are $p$-admissible, see \cite[Theorem 15.21]{Juh}. In particular, if $c\leq w\leq d$ for some positive constants $c,d$, then $w\in A_p$ for any $1<p<\infty$. Let $1<p<N$ and $J_f(x)$ denote the determinant of the Jacobian matrix of a $K$-quasiconformal mapping $f:\mathbb{R}^N\to\mathbb{R}^N$, then $w(x)=J_f(x)^{1-\frac{p}{N}}\in W_q$ for any $q\geq p$, see \cite[Corollary 15.34]{Juh}. If $1<p<\infty$ and $\nu>-N$, then $w(x)=|x|^{\nu}\in W_p$, see \cite[Corollary 15.35]{Juh}. For more examples, refer to \cite{ex1, ex2, ex3, Tero, Juh} and the references therein. 

\begin{Definition}(Weighted Spaces)
Let $1<p<\infty$ and $w\in W_p$. Then the weighted Lebesgue space $L^{p}(\Omega,w)$ is the class of measurable functions $u:\Om\to\mathbb{R}$ such that the norm of $u$ given by
\begin{equation}\label{lnorm}
\|u\|_{L^p(\Om,w)} = \Big(\int_{\Omega}|u(x)|^{p} w(x)\,dx\Big)^\frac{1}{p}<\infty.
\end{equation}
The weighted Sobolev space $W^{1,p}(\Om,w)$ is the class of measurable functions $u:\Om\to\mathbb{R}$ such that
\begin{equation}\label{norm1}
\|u\|_{1,p,w} = \Big(\int_{\Omega}|u(x)|^{p} w(x)\,dx+\int_{\Omega}|\nabla u(x)|^{p} w(x)\,dx\Big)^\frac{1}{p}<\infty.
\end{equation}
If $u\in W^{1,p}(\Om',w)$ for every $\Om'\Subset\Om$, then we say that $u\in W^{1,p}_{\mathrm{loc}}(\Om,w)$. The weighted Sobolev space with zero boundary value is defined as
$$
W^{1,p}_{0}(\Omega,w)=\overline{\big(C_{c}^{\infty}(\Omega),\|\cdot\|_{1,p,w}\big)}.
$$
\end{Definition}
Using the Poincar\'e inequality from \cite{Juh}, the norm defined by \eqref{norm1} on the space $W_{0}^{1,p}(\Omega,w)$ is equivalent to the norm given by
\begin{equation}\label{equinorm}
\|u\|_{W_0^{1,p}(\Om,w)}=\Big(\int_{\Omega}|\nabla u|^p w\,dx\Big)^\frac{1}{p}.
\end{equation}
Moreover, the space $W^{1,p}_{0}(\Omega,w)$ is a separable and uniformly convex Banach space, see \cite{Juh}.

 Next, we state an embedding result which is crucial for us. To this end, for $1<p<\infty$, we define the set
\begin{equation}\label{I}
I=\Big[\frac{1}{p-1},\infty\Big)\cap\Big(\frac{N}{p},\infty\Big).
\end{equation}
Consider the following subclass of $W_p$ given by
\begin{equation}\label{wgtcls}
W_p^{s} = \Big\{w\in W_p: w^{-s}\in L^{1}(\Omega)\,\,\text{for some}\,\,s\in I\Big\}.
\end{equation}
Then for $s\in I$, the weight
$$
w(x)=|x|^{\nu}\in W_p^{s}\text{ for any }\nu\in\Big(-N,\frac{N}{s}\Big).
$$
Following the lines of the proof of \cite[Theorem $2.6$]{G} based on \cite{Drabek} the following embedding result holds.
\begin{Lemma}\label{embedding}
Let $1<p<\infty$ and $w \in W_p^{s}$ for some $s\in I$. Then the following continuous inclusion maps hold
\[
    W^{1,p}(\Omega,w)\hookrightarrow W^{1,p_s}(\Omega)\hookrightarrow 
\begin{cases}
    L^t(\Omega),& \text{for } p_s\leq t\leq p_s^{*}, \text{ if } 1\leq p_s<N, \\
    L^t(\Omega),& \text{ for } 1\leq t< \infty, \text{ if } p_s=N, \\
    C(\overline{\Omega}),& \text{ if } p_s>N,
\end{cases}
\]
where $p_s = \frac{ps}{s+1} \in [1,p)$.
Moreover, the second embedding above is compact except for $t=p_s^{*}=\frac{Np_s}{N-p_s}$, if $1\leq p_s<N$. Further, the same result holds for the space $W_{0}^{1,p}(\Omega,w)$.
\end{Lemma}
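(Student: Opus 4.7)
The plan is to chain two embeddings. First, I would prove the continuous inclusion $W^{1,p}(\Omega,w)\hookrightarrow W^{1,p_s}(\Omega)$ by reducing the weighted norm to an unweighted one via H\"older's inequality. Then I would feed the resulting unweighted Sobolev space into the classical Sobolev/Rellich--Kondrachov embedding on the bounded smooth domain $\Omega$, which immediately produces the three regimes in the statement.

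For the first step, fix $u\in W^{1,p}(\Omega,w)$ and write
$$\int_\Omega |\nabla u|^{p_s}\,dx=\int_\Omega \bigl(|\nabla u|\,w^{1/p}\bigr)^{p_s}\,w^{-p_s/p}\,dx.$$
Apply H\"older's inequality with conjugate exponents $p/p_s$ and $p/(p-p_s)$. The crucial arithmetic is that $p_s=ps/(s+1)$ gives $p-p_s=p/(s+1)$, hence $p_s/(p-p_s)=s$. Consequently
$$\int_\Omega |\nabla u|^{p_s}\,dx\leq\Big(\int_\Omega|\nabla u|^p\,w\,dx\Big)^{p_s/p}\Big(\int_\Omega w^{-s}\,dx\Big)^{(p-p_s)/p},$$
and the last factor is finite by the defining condition of $W_p^{s}$ in \eqref{wgtcls}. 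An identical estimate controls $\int_\Omega |u|^{p_s}\,dx$ in terms of the weighted $L^p$ norm of $u$. Combining the two produces a bounded inclusion whose operator norm depends only on $\|w^{-s}\|_{L^1(\Omega)}$, $p$ and $s$. Before passing to the next step one has to check that the target is a genuine Sobolev space: the condition $p_s\geq 1$ is equivalent to $s(p-1)\geq 1$, which is exactly the condition $s\geq 1/(p-1)$ built into the set $I$ in \eqref{I}. The bound $p_s<p$ is automatic from $s<\infty$.

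For the second step, I would invoke the classical Sobolev and Rellich--Kondrachov theorems on the bounded smooth domain $\Omega$. Depending on whether $p_s<N$, $p_s=N$, or $p_s>N$, these deliver precisely the three cases of the statement, together with compactness of the embedding $W^{1,p_s}(\Omega)\hookrightarrow L^t(\Omega)$ in every regime except the borderline exponent $t=p_s^{*}$. For the zero-boundary version $W_0^{1,p}(\Omega,w)$, if $\phi_n\in C_c^\infty(\Omega)$ converges to $u$ in the norm \eqref{equinorm}, the same H\"older estimate shows that $\{\phi_n\}$ is Cauchy in $W^{1,p_s}(\Omega)$, so its limit lies in $W_0^{1,p_s}(\Omega)$, and the classical embedding for the zero-trace space applies verbatim.

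The main (and essentially only) obstacle is the exponent bookkeeping $p_s/(p-p_s)=s$ together with the verification $p_s\in[1,p)$; both are elementary consequences of the definition of $I$. Everything else is a direct appeal to the Rellich--Kondrachov theorem, exactly as in the proof of \cite[Theorem 2.6]{G}.
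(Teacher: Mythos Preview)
Your proposal is correct and is precisely the argument the paper defers to, namely the H\"older-inequality reduction $W^{1,p}(\Omega,w)\hookrightarrow W^{1,p_s}(\Omega)$ (using $p_s/(p-p_s)=s$ and $w^{-s}\in L^1(\Omega)$) followed by the classical Rellich--Kondrachov embedding, as in \cite[Theorem~2.6]{G} and \cite{Drabek}. The paper itself gives no further details beyond citing these references, so your write-up in fact supplies what the paper omits.
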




\begin{Remark}\label{Embrmk}
We note that if $0<c\leq w\leq d$ for some constants $c,d$, then $W^{1,p}(\Omega,w)=W^{1,p}(\Omega)$ and thus by the Sobolev embedding, Lemma \ref{embedding} holds by replacing $p_s$ and $p_s^{*}$ with $p$ and $p^{*}=\frac{Np}{N-p}$ respectively.
\end{Remark}

The following result follows from Farkas-Winkert \cite[Proposition $2.1$]{PF20} and Xia \cite[Proposition $1.2$]{Xiathesis}.
\begin{Lemma}\label{Happ}
For every $x\in\mathbb{R}^N\setminus\{0\}$ and $t\in\mathbb{R}\setminus\{0\}$, we have
\begin{enumerate}
\item[(A)] $x\cdot\nabla_{\eta} \mathcal{F}(x)=\mathcal{F}(x)$.
\item[(B)] $\nabla_{\eta}\mathcal{F}(tx)=\text{sign}(t)\nabla_{\eta}\mathcal{F}(x)$.
\item[(C)] $|\nabla_{\eta}\mathcal{F}(x)|\leq C$, for some positive constant $C$.
\item[(D)] $\mathcal{F}$ is strictly convex.
\end{enumerate}
\end{Lemma}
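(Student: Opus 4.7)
The plan is to extract all four statements from the homogeneity hypothesis (H2), the smoothness (H3), and the positive-definite Hessian condition (H4); these are standard facts of Finsler geometry and also appear in the references cited immediately after the statement. For (A), I would apply Euler's identity: by (H2) with $t>0$ we have $\mathcal{F}(tx)=t\mathcal{F}(x)$, and differentiating both sides in $t$ at $t=1$ via the chain rule (permissible by (H3) at every $x\neq 0$) immediately yields $x\cdot\nabla_\eta\mathcal{F}(x)=\mathcal{F}(x)$. For (B), I would differentiate the identity $\mathcal{F}(tx)=|t|\mathcal{F}(x)$ of (H2) in the spatial variable at fixed $t\neq 0$; the chain rule gives $t\,\nabla_\eta\mathcal{F}(tx)=|t|\,\nabla_\eta\mathcal{F}(x)$, and dividing by $t$ produces the claimed sign relation.

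For (C), I would combine (B) with a compactness argument. For $x\neq 0$ pick $t=1/\mathcal{F}(x)>0$ in (B); then $\nabla_\eta\mathcal{F}(x)=\nabla_\eta\mathcal{F}(x/\mathcal{F}(x))$, with $x/\mathcal{F}(x)$ lying on the Finsler unit sphere $\Sigma=\{y:\mathcal{F}(y)=1\}$. By Remark \ref{hypormk2} the set $\Sigma$ is compactly contained in $\mathbb{R}^N\setminus\{0\}$, and by (H3) the continuous map $|\nabla_\eta\mathcal{F}|$ attains a finite maximum $C$ on $\Sigma$; this $C$ serves as the uniform bound.

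For (D), I would deduce strict convexity of $\mathcal{F}$ (in the sense that the Finsler unit ball is strictly convex) from strict convexity of $\mathcal{F}^2/2$. Condition (H4) makes $\mathcal{F}^2/2$ a smooth function on $\mathbb{R}^N\setminus\{0\}$ with everywhere positive definite Hessian, hence strictly convex there; continuity and non-negativity at the origin then extend strict convexity of $\mathcal{F}^2$ to all of $\mathbb{R}^N$. To pass from $\mathcal{F}^2$ to $\mathcal{F}$, given unit vectors $x\neq y$ and $\theta\in(0,1)$, strict convexity of $\mathcal{F}^2$ together with (H2) yields $\mathcal{F}(\theta x+(1-\theta)y)^2<\theta\mathcal{F}(x)^2+(1-\theta)\mathcal{F}(y)^2=1$, as required. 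The main technical point, and the step I would handle most carefully, is the extension of strict convexity of $\mathcal{F}^2/2$ from $\mathbb{R}^N\setminus\{0\}$ to all of $\mathbb{R}^N$, since $1$-homogeneity forces $\mathcal{F}^2$ to vanish at the origin and can create boundary equality cases that must be ruled out by hand.
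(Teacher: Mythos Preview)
Your proposal is correct. The paper itself does not prove Lemma~\ref{Happ}; it simply records the statement as following from \cite[Proposition~2.1]{PF20} and \cite[Proposition~1.2]{Xiathesis}. Your direct arguments via Euler's identity, differentiation of the homogeneity relation, compactness of the Finsler unit sphere, and the positive-definite Hessian in (H4) are the standard derivations and give a self-contained treatment where the paper defers to the literature. One small remark on (D): since a positively $1$-homogeneous function is linear along rays, it cannot be strictly convex in the classical pointwise sense; you are right to read the assertion as strict convexity of the Finsler unit ball (equivalently, $\mathcal{F}(x+y)<\mathcal{F}(x)+\mathcal{F}(y)$ whenever $x,y$ are linearly independent), and your argument establishes exactly that. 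The only later use of (D) in the paper is to deduce convexity of $\mathcal{F}^p$, for which ordinary convexity of $\mathcal{F}$ already suffices.
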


Next, the classical algebraic inequality holds from Peral \cite[Lemma A.0.5]{Pe}.
\begin{Lemma}\label{AI}
For any $a,b\in\mathbb{R}^N$, there exists a constant $C=C(p)>0$, such that
\begin{equation}\label{ALGin}
\langle |a|^{p-2}a-|b|^{p-2}b, a-b \rangle\geq
\begin{cases}
C|a-b|^p,\text{ if }2\leq p<\infty,\\
C\frac{|a-b|^2}{(|a|+|b|)^{2-p}},\text{ if }1<p<2.
\end{cases}
\end{equation}
\end{Lemma}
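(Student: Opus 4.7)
The result is classical and the author cites Peral's monograph, so the proof can either invoke that reference or reproduce the standard argument. My plan is to give the latter, organized around the fact that $F(x):=|x|^{p-2}x$ is the gradient of the convex potential $\Phi(x)=\frac{1}{p}|x|^p$, so that the left-hand side is the integral of a positive quadratic form along the segment joining $a$ and $b$.

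Setting $\gamma(t)=b+t(a-b)$ for $t\in[0,1]$, the fundamental theorem of calculus gives
\begin{equation*}
\langle |a|^{p-2}a-|b|^{p-2}b,\,a-b\rangle
=\int_0^1\bigl\langle DF(\gamma(t))(a-b),\,a-b\bigr\rangle\,dt.
\end{equation*}
A direct differentiation yields $DF(x)=|x|^{p-2}\bigl(\mathrm{Id}+(p-2)\tfrac{x\otimes x}{|x|^2}\bigr)$, and hence for any $v\in\mathbb{R}^N$
\begin{equation*}
\langle DF(x)v,v\rangle=|x|^{p-2}|v|^2+(p-2)|x|^{p-4}\langle x,v\rangle^2
\;\geq\;\min(1,p-1)\,|x|^{p-2}|v|^2,
\end{equation*}
the inequality following by dropping the (nonnegative) second term when $p\geq 2$ and by Cauchy-Schwarz, $\langle x,v\rangle^2\leq|x|^2|v|^2$, when $1<p<2$.

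For the range $1<p<2$ the estimate is immediate: since $|\gamma(t)|\leq(1-t)|b|+t|a|\leq|a|+|b|$ and $s\mapsto s^{p-2}$ is decreasing, we obtain
\begin{equation*}
\langle |a|^{p-2}a-|b|^{p-2}b,a-b\rangle
\;\geq\;(p-1)|a-b|^2\int_0^1|\gamma(t)|^{p-2}\,dt
\;\geq\;(p-1)\,\frac{|a-b|^2}{(|a|+|b|)^{2-p}}.
\end{equation*}

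The main obstacle is the range $p\geq 2$, where the integrand can be small on the segment (even vanish, e.g.\ when $a=-b$). The idea I would use is a symmetrization: the triangle inequality $|\gamma(t)|+|\gamma(1-t)|\geq|\gamma(t)-\gamma(1-t)|=|2t-1||a-b|$ forces $\max\{|\gamma(t)|,|\gamma(1-t)|\}\geq\tfrac{1}{2}|2t-1||a-b|$. Since $s\mapsto s^{p-2}$ is increasing for $p\geq 2$,
\begin{equation*}
2\int_0^1|\gamma(t)|^{p-2}\,dt
=\int_0^1\!\!\bigl(|\gamma(t)|^{p-2}+|\gamma(1-t)|^{p-2}\bigr)\,dt
\;\geq\;\Bigl(\tfrac{|a-b|}{2}\Bigr)^{p-2}\!\!\int_0^1|2t-1|^{p-2}\,dt,
\end{equation*}
and the last integral equals $1/(p-1)$. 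Inserting this lower bound into the integral representation produces $\langle |a|^{p-2}a-|b|^{p-2}b,a-b\rangle\geq C(p)|a-b|^p$ with $C(p)=2^{1-p}/(p-1)$, which is the asserted estimate. The only delicate point is this symmetrization step; everything else is bookkeeping.
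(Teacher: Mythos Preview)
Your argument is correct. The paper does not supply its own proof of this lemma: it merely records the inequality as classical and cites Peral \cite{Pe}, Lemma~A.0.5. Your reconstruction is the standard one---the integral representation through $DF(x)=|x|^{p-2}\bigl(\mathrm{Id}+(p-2)|x|^{-2}x\otimes x\bigr)$, the eigenvalue bound $\langle DF(x)v,v\rangle\geq\min(1,p-1)\,|x|^{p-2}|v|^2$, and the symmetrization trick $|\gamma(t)|+|\gamma(1-t)|\geq|2t-1|\,|a-b|$ for $p\geq2$ are exactly how the result is usually proved. One minor point you might add for completeness: in the range $1<p<2$ the map $x\mapsto|x|^{p-2}x$ is not $C^1$ at the origin, so when the segment $\gamma$ passes through $0$ the fundamental theorem of calculus needs a word of justification; but since $|DF(\gamma(t))|\leq C|\gamma(t)|^{p-2}$ is integrable on $[0,1]$ (the singularity is of order $|t-t_0|^{p-2}$ with $p-2>-1$) and $F\circ\gamma$ is continuous, the identity survives.
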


More generally, we state the Finsler algebraic inequality from \cite[Lemma $2.5$]{BGM}.
\begin{Lemma}\label{alg}
Let $2\leq p<\infty$. Then, for every $x,y\in\mathbb{R}^N$, there exists a constant $C=C(p)>0$, such that
\begin{equation}\label{algineq}
\begin{split}
\langle \mathcal{F}(x)^{p-1}\nabla_{\eta}\mathcal{F}(x)-\mathcal{F}(y)^{p-1}\nabla_{\eta}\mathcal{F}(y),x-y\rangle&\geq
C\mathcal{F}(x-y)^p.
\end{split}
\end{equation}
\end{Lemma}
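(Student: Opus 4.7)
The plan is to mimic the classical convex-analytic proof of Lemma \ref{AI}, adapted to the Finsler setting via the strict convexity supplied by (H4) and the homogeneity of $\mathcal{F}$. I introduce $\Phi(\xi):=\frac{1}{p}\mathcal{F}(\xi)^p$, which by (H3) belongs to $C^2(\mathbb{R}^N\setminus\{0\})$ and satisfies $\nabla\Phi(\xi)=\mathcal{F}(\xi)^{p-1}\nabla_{\eta}\mathcal{F}(\xi)$, so the claimed inequality is the same as $\langle\nabla\Phi(x)-\nabla\Phi(y),x-y\rangle\geq C\,\mathcal{F}(x-y)^p$. The boundary case $y=0$ follows immediately from Lemma \ref{Happ}(A), which yields $\langle\nabla\Phi(x),x\rangle=\mathcal{F}(x)^p$; the case $x=0$ is symmetric. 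By continuity in $(x,y)$ I may therefore restrict to $x,y\in\mathbb{R}^N\setminus\{0\}$.

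The central step is a quantitative lower bound on $D^2\Phi$. Setting $H(\xi):=\frac{1}{2}\mathcal{F}(\xi)^2$, a direct computation (using $F\,\nabla_\eta^2 F=\nabla_\eta^2H-\nabla_\eta F\otimes\nabla_\eta F$) gives
\[
D^2\Phi(\xi)=\mathcal{F}(\xi)^{p-2}\bigl[(p-2)\,\nabla_{\eta}\mathcal{F}(\xi)\otimes\nabla_{\eta}\mathcal{F}(\xi)+\nabla_{\eta}^2 H(\xi)\bigr].
\]
For $p\geq 2$ the first summand in the bracket is positive semidefinite, while (H4) makes $\nabla_{\eta}^2 H$ strictly positive definite at every point of the compact Wulff sphere $\Sigma:=\{\mathcal{F}=1\}$. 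Compactness of $\Sigma$ and the equivalence of $\mathcal{F}$ with the Euclidean norm (Remark \ref{hypormk2}) produce a constant $c_0=c_0(p)>0$ such that $\langle D^2\Phi(\xi)v,v\rangle\geq c_0\mathcal{F}(v)^2$ for all $\xi\in\Sigma$ and $v\in\mathbb{R}^N$. Since $D^2\Phi$ is positively homogeneous of degree $p-2$, the bound upgrades to
\[
\langle D^2\Phi(\xi)v,v\rangle\geq c_0\,\mathcal{F}(\xi)^{p-2}\,\mathcal{F}(v)^2\qquad\forall\,\xi\neq 0,\ v\in\mathbb{R}^N.
\]

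With this in hand I integrate along the segment $\xi_t:=y+t(x-y)$. The segment meets the origin in at most one point, and since $D^2\Phi$ is bounded (and, for $p>2$, extends continuously by $0$) near $0$, the fundamental theorem of calculus applied to $t\mapsto\nabla\Phi(\xi_t)\cdot(x-y)$ yields
\[
\langle\nabla\Phi(x)-\nabla\Phi(y),x-y\rangle=\int_0^1\langle D^2\Phi(\xi_t)(x-y),x-y\rangle\,dt\geq c_0\,\mathcal{F}(x-y)^2\!\int_0^1\!\mathcal{F}(\xi_t)^{p-2}dt.
\]
I finish with the elementary averaging inequality $\int_0^1\mathcal{F}(\xi_t)^{p-2}\,dt\geq c_1(p)\,\mathcal{F}(x-y)^{p-2}$ for $p\geq 2$: the function $f(t):=\mathcal{F}(\xi_t)$ is $\mathcal{F}(x-y)$-Lipschitz, and the triangle inequality gives $f(0)+f(1)\geq\mathcal{F}(x-y)$, so at least one endpoint value exceeds $\mathcal{F}(x-y)/2$; Lipschitz control then produces an interval of length $\geq 1/4$ on which $f(t)\geq \mathcal{F}(x-y)/4$, yielding $c_1(p)=4^{1-p}$. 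The main obstacle is the Hessian representation and the passage from the Euclidean-based positivity of (H4) to a lower bound intrinsic to $\mathcal{F}$, since only then does the degree bookkeeping and the compactness argument on $\Sigma$ cleanly furnish a constant depending solely on $p$ and $\mathcal{F}$; once that step is in place the integration and Lipschitz bookkeeping in the last step are routine.
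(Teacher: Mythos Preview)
Your argument is correct. The Hessian identity
\[
D^2\Phi(\xi)=\mathcal{F}(\xi)^{p-2}\bigl[(p-2)\,\nabla_{\eta}\mathcal{F}(\xi)\otimes\nabla_{\eta}\mathcal{F}(\xi)+\nabla_{\eta}^2 H(\xi)\bigr]
\]
is right, the compactness/homogeneity step that extracts $\langle D^2\Phi(\xi)v,v\rangle\geq c_0\,\mathcal{F}(\xi)^{p-2}\mathcal{F}(v)^2$ is sound, the treatment of a possible origin on the segment is handled correctly (for $p>2$ the Hessian extends continuously by $0$, for $p=2$ it is $0$-homogeneous hence bounded, so the integrand is $L^\infty$ and the fundamental theorem of calculus applies), and the Lipschitz/triangle-inequality estimate giving $\int_0^1\mathcal{F}(\xi_t)^{p-2}\,dt\geq 4^{1-p}\mathcal{F}(x-y)^{p-2}$ is clean. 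One cosmetic point: your constant $c_0$ (hence the final $C$) depends on $\mathcal{F}$ as well as $p$, through the minimal eigenvalue of $\nabla_\eta^2H$ on $\Sigma$ and the norm-equivalence constants of Remark~\ref{hypormk2}; this matches the implicit convention in the paper, where $\mathcal{F}$ is fixed, even though the statement writes $C=C(p)$.

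As for comparison with the paper: there is no proof to compare with. The paper does not prove Lemma~\ref{alg} but simply imports it from \cite[Lemma~2.5]{BGM}. Your argument therefore supplies a self-contained proof where the paper offers only a citation, which is a genuine addition; the convex-analytic route via the Hessian of $\tfrac{1}{p}\mathcal{F}^p$ is the natural Finsler analogue of the classical proof of Lemma~\ref{AI} and is presumably close in spirit to what \cite{BGM} does.
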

\begin{Remark}\label{algrmk}
When $2\leq p<\infty$ and $\mathcal{F}(x)=\mathcal{F}_2(x)=|x|$ as given by \eqref{ex1}, Lemma \ref{alg} coincides with Lemma \ref{AI}.
\end{Remark}

\begin{Corollary}\label{regapp}
From Lemma \ref{Happ} and Remark \ref{hypormk2} we have
\begin{equation}\label{lbd}
\mathcal{F}(x)^{p-1}\nabla_{\eta}\mathcal{F}(x)\cdot x=\mathcal{F}(x)^p\geq C_1|x|^p,\quad\forall\,x\in\mathbb{R}^N,
\end{equation}
\begin{equation}\label{ubd}
\big|\mathcal{F}(x)^{p-1}\nabla_{\eta}\mathcal{F}(x)\big|\leq C_2|x|^{p-1},\quad\forall\,x\in\mathbb{R}^N\text{ and}
\end{equation}
\begin{equation}\label{homo}
\mathcal{F}(tx)^{p-1}\nabla_{\eta}\mathcal{F}(tx)=|t|^{p-2}t \mathcal{F}(x)^{p-1}\nabla_{\eta}\mathcal{F}(x),\quad\forall\,x\in\mathbb{R}^N\text{ and }t\in\mathbb{R}\setminus\{0\}.
\end{equation}
Moreover, by Lemma \ref{alg}, we have
\begin{equation}\label{regapp1}
\langle \mathcal{F}(x)^{p-1}\nabla_{\eta}\mathcal{F}(x)-\mathcal{F}(y)^{p-1}\nabla_{\eta}\mathcal{F}(y),x-y\rangle>0,\quad\forall\,x\neq y\in\mathbb{R}^N.
\end{equation}
\end{Corollary}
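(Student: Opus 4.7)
The first three identities are essentially direct consequences of the cited lemmas, so I would dispatch them quickly. For \eqref{lbd}, Lemma~\ref{Happ}(A) gives $x\cdot\nabla_\eta \mathcal{F}(x)=\mathcal{F}(x)$ at every $x\neq 0$; multiplying by $\mathcal{F}(x)^{p-1}$ produces the stated equality, and the lower bound $C_1^p|x|^p$ (after relabeling the constant) follows from Remark~\ref{hypormk2}. For \eqref{ubd}, combine the uniform bound $|\nabla_\eta \mathcal{F}(x)|\le C$ from Lemma~\ref{Happ}(C) with $\mathcal{F}(x)\le C_2|x|$ from Remark~\ref{hypormk2}. For \eqref{homo}, use the homogeneity $(H2)$ to write $\mathcal{F}(tx)^{p-1}=|t|^{p-1}\mathcal{F}(x)^{p-1}$, together with $\nabla_\eta \mathcal{F}(tx)=\mathrm{sign}(t)\,\nabla_\eta \mathcal{F}(x)$ from Lemma~\ref{Happ}(B), and collapse $|t|^{p-1}\mathrm{sign}(t)=|t|^{p-2}t$. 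In each case, the $x=0$ instance is handled by continuity, since $\mathcal{F}(x)^{p-1}\to 0$ as $x\to 0$ while $|\nabla_\eta \mathcal{F}|$ stays bounded.

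The only genuine point is \eqref{regapp1}. For $p\ge 2$ the Finsler algebraic inequality of Lemma~\ref{alg} gives
\[
\langle \mathcal{F}(x)^{p-1}\nabla_\eta \mathcal{F}(x)-\mathcal{F}(y)^{p-1}\nabla_\eta \mathcal{F}(y),\,x-y\rangle\ge C\,\mathcal{F}(x-y)^p,
\]
and by $(H1)$ the right-hand side is strictly positive whenever $x\neq y$. For the remaining range $1<p<2$, Lemma~\ref{alg} is not available, so I would instead invoke the underlying convex-analytic fact: by Lemma~\ref{Happ}(D), $\mathcal{F}$ is strictly convex on $\mathbb{R}^N$, and the map $t\mapsto t^p/p$ is convex and strictly increasing on $[0,\infty)$ for $p>1$, hence $H(x):=\mathcal{F}(x)^p/p$ is strictly convex on $\mathbb{R}^N$. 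A quick check using $(H3)$ and $\mathcal{F}(x)\le C_2|x|$ shows that $H\in C^1(\mathbb{R}^N)$ with $\nabla H(x)=\mathcal{F}(x)^{p-1}\nabla_\eta \mathcal{F}(x)$ (interpreted as $0$ at the origin). Strict convexity of a differentiable function then forces strict monotonicity of its gradient, i.e.\ $\langle \nabla H(x)-\nabla H(y),x-y\rangle>0$ for $x\neq y$, which is exactly \eqref{regapp1}.

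The only subtlety I anticipate is making sure the $C^1$-extension of $\mathcal{F}^{p-1}\nabla_\eta \mathcal{F}$ at the origin is handled cleanly, because $\nabla_\eta \mathcal{F}$ itself is only smooth on $\mathbb{R}^N\setminus\{0\}$ by $(H3)$; the bound $|\nabla_\eta \mathcal{F}|\le C$ together with $\mathcal{F}(x)^{p-1}\to 0$ as $x\to 0$ for $p>1$ settles it, but it is the one place one has to actually look at the origin rather than simply quote Lemma~\ref{Happ}.
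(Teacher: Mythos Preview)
Your derivations of \eqref{lbd}, \eqref{ubd}, and \eqref{homo} are correct and are precisely the intended justifications; the paper offers no proof of the corollary beyond the phrases ``From Lemma~\ref{Happ} and Remark~\ref{hypormk2}'' and ``by Lemma~\ref{alg}'', so your explicit unpacking matches what the authors have in mind. The same goes for \eqref{regapp1} when $p\ge 2$: applying Lemma~\ref{alg} and invoking $(H1)$ to see that $\mathcal{F}(x-y)^p>0$ for $x\neq y$ is exactly the paper's reasoning.

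Your treatment of the range $1<p<2$ goes beyond what the paper actually proves---the corollary only cites Lemma~\ref{alg}, which is stated for $p\ge 2$, so strictly speaking this case is not part of the claim. If you do want to cover it, be aware of a gap in your convexity step: $\mathcal{F}$ is positively homogeneous of degree one by $(H2)$, hence affine along every ray through the origin, so it is \emph{not} strictly convex as a function on $\mathbb{R}^N$ in the usual sense (statement (D) in Lemma~\ref{Happ} should be read as strict convexity of the unit ball $\{\mathcal{F}\le 1\}$, i.e., no line segments on its boundary). Thus the implication ``$\mathcal{F}$ strictly convex and $t\mapsto t^p/p$ convex increasing $\Rightarrow$ $\mathcal{F}^p/p$ strictly convex'' fails along rays. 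The conclusion that $H=\mathcal{F}^p/p$ is strictly convex is nonetheless true for every $p>1$, but the argument requires the \emph{strict} convexity of $t\mapsto t^p$ on $[0,\infty)$ to handle points with $\mathcal{F}(x)\neq\mathcal{F}(y)$, together with the strict convexity of the unit ball (equivalently $(H4)$) to treat the case $\mathcal{F}(x)=\mathcal{F}(y)$ with $x\neq y$. Once $H$ is shown strictly convex and $C^1$, your gradient-monotonicity conclusion is valid.
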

\subsection*{Notation:} Throughout the rest of the article, we shall use the following notations.
\begin{itemize}
\item For $u\in W_0^{1,p}(\Om,w)$, denote by $\|u\|$ to mean the norm $\|u\|_{W_0^{1,p}(\Om,w)}$ as defined by \eqref{equinorm}.
\item For given constants $c,d$ and a set $S$, by $c\leq u\leq d$ in $S$, we mean $c\leq u\leq d$ almost everywhere in $S$. Moreover, we write $|S|$ to denote the Lebesgue measure of $S$.
\item $\langle,\rangle$ denotes the standard inner product in $\mathbb{R}^N$.
\item The conjugate exponent of $\theta>1$ by $\theta':=\frac{\theta}{\theta-1}$.
\item For $1<p<N$, we denote by $p^*:=\frac{Np}{N-p}$ to mean the critical Sobolev exponent.
\item For $a\in\mathbb{R}$, we denote by $a^+:=\max\{a,0\}$, $a^-=\max\{-a,0\}$ and $a_-:=\min\{a,0\}$.
\item We write by $c,C$ or $C_i$ for $i\in\mathbb{N}$ to mean a constant which may vary from line to line or even in the same line. If a constant $C$ depends on $r_1,r_2,\ldots$, we denote it by $C(r_1,r_2,\ldots)$. 
\end{itemize}

Now, we define the notion of weak solutions for the problem $\mathcal{(S)}$ as follows:
\begin{Definition}(Weak solution for $\mathcal{(S)}$)
Let $1<p<\infty$, $0<\delta,\gamma<1$ and $w\in W_p$. Then, we say that $u\in W_0^{1,p}(\Om,w)$ is a weak solution of the problem $\mathcal{(S)}$, if $u>0$ in $\Omega$ and for every $\omega\Subset\Omega$, there exists a constant $c_\omega$ such that $u\geq c_{\omega}>0$ in $\omega$ and for every $\phi\in C_c^{1}(\Omega)$, we have 
\begin{equation}\label{weakform1}
<-\mathcal{F}_{p,w} u,\phi>:=\int_{\Omega}w(x)\mathcal{F}(\nabla u)^{p-1}\mathcal{F}(\nabla u)\nabla\phi\,dx=\int_{\Omega}\Big(\frac{f}{u^{\delta}}+\frac{g}{u^{\gamma}}\Big)\phi\,dx.
\end{equation}
\end{Definition}

Further, we define weak solutions for the problem $\mathcal{(R)}$ as follows.
\begin{Definition}(Weak solution for $\mathcal{(R)}$)
Let $1<p<\infty$ and $w\in W_p$. Then, we say that $u\in W^{1,p}_{\mathrm{loc}}(\Om,w)$ is a weak solution of the problem $\mathcal{(R)}$, if $u>0$ in $\Omega$ such that $(u-\epsilon)^+\in W_0^{1,p}(\Om,w)$ for every $\epsilon>0$ and for every $\omega\Subset\Omega$, there exists a constant $c_\omega$ such that $u\geq c_{\omega}>0$ in $\omega$ and for every $\phi\in C_c^{1}(\Omega)$, we have 
\begin{equation}\label{weakform2}
<-\mathcal{F}_{p,w} u,\phi>:=\int_{\Omega}w(x)\mathcal{F}(\nabla u)^{p-1}\mathcal{F}(\nabla u)\nabla\phi\,dx=\int_{\Omega}h(x)e^\frac{1}{u}\phi\,dx.
\end{equation}
\end{Definition}

\subsection*{Statement of the main results} 
Now we state our main results as follows:
\begin{Theorem}\label{thm1}
Let $0<\delta,\gamma<1$, $2\leq p<\infty$ and $w\in W_p^{s}$ for some $s\in I$. Assume that $(f,g)\neq (0,0)$ is nonnegative. 
\begin{enumerate}
\item[(a)] Let $(f,g)\in L^1(\Omega)\times L^1(\Omega)$. Then the problem $\mathcal{(S)}$ admits at most one weak solution in $W_{0}^{1,p}(\Omega,w)$.
\item[(b)] Let $(f,g)\in L^{m_{\delta}}(\Omega)\times L^{r_{\gamma}}(\Omega)$, where
\[
    m_{\delta}:=
\begin{cases}
    \big(\frac{p_s^{*}}{1-\delta}\big)',& \text{for } 1\leq p_s<N, \\
    m>1,& \text{for } p_s=N, \\
    1,& \text{ for } p_s>N
\end{cases}
\]
and
\[
    r_{\gamma}:=
\begin{cases}
    \big(\frac{p_s^{*}}{1-\gamma}\big)',& \text{for } 1\leq p_s<N, \\
    m>1,& \text{for } p_s=N, \\
    1,& \text{ for } p_s>N.
\end{cases}
\]
Then the problem $\mathcal{(S)}$ has a weak solution $u_{\delta,\gamma}$ in $W_0^{1,p}(\Om,w)$.
\end{enumerate}
\end{Theorem}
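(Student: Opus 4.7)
The plan is to handle (a) by a monotonicity argument on the difference of two solutions and (b) by the Boccardo--Orsina approximation scheme (see \cite{BocOr, BGaniso}), suitably adapted to the weighted anisotropic operator $\mathcal{F}_{p,w}$.

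For (a), let $u_1, u_2 \in W_0^{1,p}(\Omega,w)$ be two weak solutions. The local lower bounds $u_i \geq c_\omega > 0$ together with a standard truncation/density argument make $(u_1 - u_2)^+$ an admissible test function in both weak formulations. Subtracting them and invoking Lemma \ref{alg} (valid since $p \geq 2$) on the left yields
\[
C \int_{\{u_1 > u_2\}} w\, \mathcal{F}(\nabla(u_1 - u_2))^p \, dx \leq \int_{\{u_1 > u_2\}} \left[f \left(\frac{1}{u_1^\delta} - \frac{1}{u_2^\delta}\right) + g \left(\frac{1}{u_1^\gamma} - \frac{1}{u_2^\gamma}\right)\right] (u_1 - u_2)\, dx \leq 0,
\]
since $t \mapsto t^{-\delta}, t^{-\gamma}$ are strictly decreasing on $(0,\infty)$. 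Hence $\nabla (u_1 - u_2)^+ \equiv 0$ a.e., and since $(u_1 - u_2)^+ \in W_0^{1,p}(\Omega,w)$, we conclude $u_1 \leq u_2$; swapping the roles gives $u_1 = u_2$.

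For (b), approximate $\mathcal{(S)}$ by
\[
-\mathcal{F}_{p,w} u_n = \frac{f_n(x)}{(u_n^+ + \tfrac{1}{n})^\delta} + \frac{g_n(x)}{(u_n^+ + \tfrac{1}{n})^\gamma} \text{ in } \Omega, \quad u_n = 0 \text{ on } \partial\Omega,
\]
with $f_n = \min\{f,n\}$, $g_n = \min\{g,n\}$. Existence of $u_n \in W_0^{1,p}(\Omega, w)$ follows from Schauder's fixed-point theorem applied to $T(v) = u$, where $u$ solves the same PDE with $v$ replacing $u_n$ on the right, the compactness coming from Lemma \ref{embedding}. Testing with $u_n^-$ gives $u_n \geq 0$; standard comparison yields $u_n \leq u_{n+1}$; and since $(f,g) \not\equiv (0,0)$, the strong minimum principle forces $u_1 > 0$ in $\Omega$, so that $u_n \geq u_1 \geq c_\omega > 0$ on every $\omega \Subset \Omega$ uniformly in $n$. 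The key a priori estimate is obtained by testing with $u_n$ itself: using $u_n/(u_n+1/n)^\delta \leq u_n^{1-\delta}$, we obtain
\[
\|u_n\|^p \leq \int_\Omega f \, u_n^{1-\delta} \, dx + \int_\Omega g \, u_n^{1-\gamma} \, dx.
\]
Applying Hölder's inequality with the exponents $m_\delta, r_\gamma$ prescribed in the theorem, together with the embedding $W_0^{1,p}(\Omega,w) \hookrightarrow L^{p_s^{*}}(\Omega)$ (or the $L^t$, $C(\overline\Omega)$ alternatives in the borderline and supercritical cases) from Lemma \ref{embedding}, we deduce
\[
\|u_n\|^p \leq C\bigl(\|u_n\|^{1-\delta} + \|u_n\|^{1-\gamma}\bigr),
\]
whence $\|u_n\| \leq M$ since $1-\delta, 1-\gamma < 1 < p$.

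By monotonicity and reflexivity, $u_n \nearrow u_{\delta,\gamma}$ a.e.\ and weakly in $W_0^{1,p}(\Omega,w)$. To pass to the limit in the weak formulation, I would recover the a.e.\ convergence of $\nabla u_n$ via the classical Boccardo--Murat argument, exploiting the strict monotonicity in Lemma \ref{alg}; the right-hand side then passes to the limit by dominated convergence, since the singular factors are uniformly bounded on the support of any $\phi \in C_c^1(\Omega)$ thanks to the local positivity. The main technical obstacles are (i) securing the uniform local lower bound $u_n \geq c_\omega > 0$ (which requires a strong minimum principle for $\mathcal{F}_{p,w}$) and (ii) the a.e.\ convergence of the gradients in the monotone nonlinearity; both are resolved by adapting the Boccardo--Orsina--Murat machinery to the weighted anisotropic setting, relying critically on Lemma \ref{alg}.
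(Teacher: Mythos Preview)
Your proof of part~(a) and the construction/a~priori bounds in part~(b) match the paper's argument essentially line by line: test with $(u_1-u_2)^+$ and invoke Lemma~\ref{alg} for uniqueness; for existence, run the Boccardo--Orsina approximation, obtain $u_n$ by Schauder, get monotonicity and uniform local positivity via comparison and the strong minimum principle, and bound $\|u_n\|$ by testing with $u_n$ and using Lemma~\ref{embedding}.

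The one genuine methodological difference is in the passage to the limit. You propose recovering a.e.\ convergence of $\nabla u_n$ through the Boccardo--Murat compactness argument based on the strict monotonicity in Lemma~\ref{alg}. The paper instead proves \emph{strong} convergence $u_n\to u_{\delta,\gamma}$ in $W_0^{1,p}(\Omega,w)$ by a variational route (Lemma~\ref{lemma1}): each $u_n$ is the minimizer of the approximated energy, which yields the pointwise inequality $\|u_n\|^p\le\|\phi\|^p+p\int\frac{u_n-\phi}{(u_n+\frac1n)^\delta}f_n+p\int\frac{u_n-\phi}{(u_n+\frac1n)^\gamma}g_n$ for every $\phi\in W_0^{1,p}(\Omega,w)$; taking $\phi=u_{\delta,\gamma}$ and using $u_n\le u_{\delta,\gamma}$ gives $\|u_n\|\le\|u_{\delta,\gamma}\|$, and together with weak lower semicontinuity one obtains norm convergence, hence strong convergence by uniform convexity. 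Both approaches are valid here. The paper's argument is slightly more economical (no truncations, no Cauchy-in-measure estimates) and, more importantly, it simultaneously delivers that $u_{\delta,\gamma}$ is the minimizer of the limit energy $I_{\delta,\gamma}$ (Lemma~\ref{lemma1}(d)), a fact that is \emph{essential} in the proof of the Sobolev inequality Theorem~\ref{thm2}. Your Boccardo--Murat route would require a separate argument for that minimizing property if you later want Theorem~\ref{thm2}.
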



In addition, we prove the following regularity results for $u_{\delta,\gamma}$.
\begin{Theorem}\label{regthm1}
Let $2\leq p<\infty$ and $w\in W_p^{s}$ for some $s\in I$. Suppose that $(f,g)\neq(0,0)$ is nonnegative such that
\begin{enumerate}
\item[(a)] $(f,g)\in L^q(\Om)\times L^q(\Om)$ for $q>\frac{p_s^{*}}{p_s^{*}-p}$, if $1\leq p_s<N$,
\item[(b)] $(f,g)\in L^q(\Om)\times L^q(\Om)$ for $q>\frac{r}{r-p}$, if $p_s=N$, $r>p$,
\item[(c)] $(f,g)\in L^1(\Om)\times L^1(\Om)$ if $p_s>N$. 
\end{enumerate} 
Then, $u_{\delta,\gamma}\in L^\infty(\Om)$.
\end{Theorem}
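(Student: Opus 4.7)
The plan is a Stampacchia-type truncation/iteration on the super-level sets $A_k := \{u > k\}$ of the weak solution $u = u_{\delta,\gamma}$ of $(\mathcal{S})$ from Theorem \ref{thm1}(b). The key observation is that testing the weak formulation against $G_k(u) := (u-k)^+$ with $k \geq 1$ completely absorbs the singularity: on $\operatorname{supp} G_k(u) \subset \{u \geq 1\}$ one has $u^{-\delta}, u^{-\gamma} \leq 1$, so the right-hand side of \eqref{weakform1} is controlled by $\int_\Omega (f+g) G_k(u)\,dx$, where $f+g$ lies in the $L^q$-class dictated by the hypothesis. First I would verify that $G_k(u) \in W_0^{1,p}(\Omega,w)$ is an admissible test function by density approximation from $C_c^\infty(\Omega)$. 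Applying the anisotropic ellipticity bound \eqref{lbd} to the left-hand side then yields
\[
\int_\Omega w |\nabla G_k(u)|^p \, dx \leq C \int_{A_k} (f+g) G_k(u) \, dx, \qquad k \geq 1.
\]

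Next, the argument splits into the three regimes dictated by Lemma \ref{embedding}. When $1 \leq p_s < N$, I would apply the embedding $W_0^{1,p}(\Omega,w) \hookrightarrow L^{p_s^*}(\Omega)$ to bound $\|G_k(u)\|_{L^{p_s^*}(\Omega)}^p$ by the left-hand side above, and use Hölder with exponents $q$, $p_s^*$ and the remaining conjugate on the right, producing
\[
\|G_k(u)\|_{L^{p_s^*}(\Omega)}^{p-1} \leq C\,\|f+g\|_{L^q(\Omega)}\, |A_k|^{1 - 1/q - 1/p_s^*}.
\]
Combining with the Chebyshev bound $(h-k)|A_h|^{1/p_s^*} \leq \|G_k(u)\|_{L^{p_s^*}(\Omega)}$ for $h > k$ gives the nonlinear recursion
\[
|A_h| \leq \frac{C}{(h-k)^{p_s^*}}\, |A_k|^\beta, \qquad \beta := \frac{p_s^*}{p-1}\Bigl(1 - \frac{1}{q} - \frac{1}{p_s^*}\Bigr).
\]
A routine algebraic check shows that the hypothesis $q > p_s^*/(p_s^* - p)$ is equivalent to $\beta > 1$, precisely the threshold required by Stampacchia's classical iteration lemma to force $|A_{k_0}| = 0$ for some finite $k_0$, i.e. $u \leq k_0$ a.e.\ in $\Omega$. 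The case $p_s = N$ runs identically with $p_s^*$ replaced by any fixed $r > p$ satisfying $q > r/(r-p)$ (such an $r$ exists by hypothesis), and in the case $p_s > N$ no iteration is needed since Lemma \ref{embedding} provides the continuous inclusion $W_0^{1,p}(\Omega,w) \hookrightarrow C(\overline{\Omega})$, whence $u \in L^\infty(\Omega)$ automatically.

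The main obstacle I anticipate is the test-function step. The weak formulation \eqref{weakform1} only admits $\phi \in C_c^1(\Omega)$, so inserting $G_k(u)$ must be justified by approximation. For singular problems this is often delicate, but here the truncation level $k \geq 1$ is exactly what defuses the difficulty: the terms $fu^{-\delta} + gu^{-\gamma}$ restricted to $\operatorname{supp} G_k(u)$ are dominated by $f+g \in L^1(\Omega)$, so once the density argument is in place the remainder is standard quasilinear Stampacchia iteration, adapted to the weighted anisotropic setting through \eqref{lbd} and Lemma \ref{embedding}.
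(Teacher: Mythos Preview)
Your Stampacchia iteration is exactly what the paper uses, with one tactical difference that neatly resolves the obstacle you flag. Rather than testing the singular limit equation $(\mathcal{S})$ with $(u_{\delta,\gamma}-k)^+$ and having to justify this by density in $W_0^{1,p}(\Omega,w)$, the paper runs the entire argument at the level of the approximating solutions $u_n$ of \eqref{mainapprox} (this is Lemma~\ref{essbdd}). Since each $u_n\in W_0^{1,p}(\Omega,w)\cap L^\infty(\Omega)$ solves a \emph{regular} equation with bounded right-hand side, taking $\phi_k=(u_n-k)^+$ as test function needs no further justification; the same H\"older/Chebyshev chain then yields $|A(h)|\leq C(h-k)^{-p_s^*}|A(k)|^\alpha$ with $\alpha>1$ (your exponent $\beta$ and the paper's $\alpha$ coincide), and Stampacchia's lemma gives $\|u_n\|_{L^\infty(\Omega)}\leq C$ uniformly in $n$. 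The monotone pointwise convergence $u_n\nearrow u_{\delta,\gamma}$ from Remark~\ref{limit} immediately transfers the bound to $u_{\delta,\gamma}$. So the paper's choice buys a clean proof in which the density argument you anticipate as the main difficulty is simply bypassed; your direct approach is viable but would require carrying out that approximation step carefully (controlling the contribution of $\phi_j$ on $\{u<1\}$, where $u^{-\delta}$ is not globally bounded), which the paper avoids altogether.
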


\begin{Theorem}\label{thm1new}
If $w\in W_p^{s}$ for some $s\in I$ and $\mathcal{F}_{p,w}=\Delta_{p,w}$ or $\mathcal{S}_{p,w}$ as given by \eqref{exlap}, then Theorem \ref{thm1}-\ref{regthm1} holds, for any $1<p<\infty$.
\end{Theorem}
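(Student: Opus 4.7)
The strategy is to revisit the proofs of Theorems \ref{thm1} and \ref{regthm1} and isolate precisely where the restriction $2\leq p<\infty$ enters. In the general anisotropic setting this hypothesis is used only through the Finsler algebraic inequality of Lemma \ref{alg}, which in turn feeds into (i) strong convergence of gradients when passing to the limit in the approximating sequence, (ii) strict monotonicity of $\mathcal{F}_{p,w}$ needed for uniqueness, and (iii) Caccioppoli-type estimates underlying the $L^\infty$ bound. When $\mathcal{F}_{p,w}$ reduces to $\Delta_{p,w}$ or $\mathcal{S}_{p,w}$, the classical algebraic inequality of Lemma \ref{AI} becomes available, and crucially covers the sub-quadratic range $1<p<2$.

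First I would verify the operator-level algebraic inequality in each of the two cases. For $\mathcal{F}(\eta)=|\eta|$ one has $\mathcal{F}(\eta)^{p-1}\nabla_{\eta}\mathcal{F}(\eta)=|\eta|^{p-2}\eta$, so Lemma \ref{AI} applies verbatim. For $\mathcal{F}(\eta)=(\sum_i |\eta_i|^p)^{1/p}$, a direct computation gives $\mathcal{F}(\eta)^{p-1}\nabla_{\eta}\mathcal{F}(\eta)=(|\eta_i|^{p-2}\eta_i)_{1\leq i\leq N}$, so the vectorial monotonicity estimate reduces to a sum of $N$ scalar instances of Lemma \ref{AI}. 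In both cases, for $2\leq p<\infty$ one recovers the $C|x-y|^p$ lower bound used in the original proof, whereas for $1<p<2$ one obtains the weighted quadratic bound $C|x-y|^2/(|x|+|y|)^{2-p}$.

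Next I would run the Boccardo--Orsina-type approximation scheme for part (b) of Theorem \ref{thm1}: truncate the singular right-hand side at level $n$, solve the regularised problem by Minty--Browder, obtain uniform bounds in $W_0^{1,p}(\Omega,w)$ using \eqref{lbd} together with the $L^{m_\delta}\times L^{r_\gamma}$ integrability of $(f,g)$, and extract a weak limit $u_{\delta,\gamma}$. The local lower bound $u_{\delta,\gamma}\geq c_\omega>0$ is recovered exactly as in the $p\geq 2$ case because it rests on a sub/supersolution argument that only uses the monotonicity in Corollary \ref{regapp}. For uniqueness in part (a) I would test the difference of two solutions against the truncation $T_k(u-v)$, use the local positivity to tame the singular terms, and invoke monotonicity; for $1<p<2$ the conclusion follows after the H\"older splitting described below. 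For the boundedness in Theorem \ref{regthm1} the Moser iteration only requires \eqref{lbd} and the embedding of Lemma \ref{embedding}, both independent of $p$.

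The main technical point, and the place where $1<p<2$ genuinely bites, is the upgrade from a.e.\ convergence of $\nabla u_n$ to strong $L^p(\Omega,w)$ convergence. I would split
\[
\int_\Omega |\nabla u_n-\nabla u|^p w\,dx=\int_\Omega \Bigl[\frac{|\nabla u_n-\nabla u|^2}{(|\nabla u_n|+|\nabla u|)^{2-p}}\Bigr]^{p/2}(|\nabla u_n|+|\nabla u|)^{p(2-p)/2} w\,dx
\]
and apply H\"older with exponents $2/p$ and $2/(2-p)$. The first factor is controlled by the sub-quadratic form of Lemma \ref{AI} applied to the equation satisfied by $u_n-u$, while the second factor is bounded uniformly in $n$ by the $W_0^{1,p}(\Omega,w)$ estimates. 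This is the only step where a genuinely new input beyond mechanically repeating the proofs of Theorems \ref{thm1} and \ref{regthm1} is required.
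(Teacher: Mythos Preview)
Your proposal is correct, but it diverges from the paper's approach in how strong convergence of $\nabla u_n$ is obtained. The paper's proof is extremely short: it simply invokes Lemma~\ref{AI} together with Remarks~\ref{auxrmk} and~\ref{anisormk}, and then repeats the proofs of Theorems~\ref{thm1}--\ref{regthm1} verbatim. The reason it can do this is that the paper never appeals to the algebraic inequality when proving strong convergence of gradients. Instead, Lemma~\ref{lemma1}(a)--(c) exploits the minimization structure: $u_n$ is the unique minimizer of the auxiliary convex functional from Lemma~\ref{auxresult}, which yields $\|u_n\|\leq\|u_{\delta,\gamma}\|$ directly, and norm convergence plus weak convergence in the uniformly convex space $W_0^{1,p}(\Omega,w)$ then gives strong convergence for every $1<p<\infty$ without any case distinction. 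Your H\"older-splitting argument is a valid and standard alternative, but it is extra machinery that the paper's variational route sidesteps entirely.

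Two smaller remarks. First, your claim that uniqueness for $1<p<2$ requires the H\"older splitting is not accurate: once the sub-quadratic form of Lemma~\ref{AI} shows that the nonnegative integrand $\tfrac{|\nabla u-\nabla v|^2}{(|\nabla u|+|\nabla v|)^{2-p}}\,w$ has nonpositive integral, it vanishes a.e., so $\nabla u=\nabla v$ immediately. Second, the paper's $L^\infty$ bound (Lemma~\ref{essbdd}) is obtained via a De~Giorgi-type level-set iteration rather than Moser iteration, but this is cosmetic since both rely only on \eqref{lbd} and the embedding Lemma~\ref{embedding}, which are $p$-independent as you note.
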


\begin{Remark}\label{thm1newrmk1}
For $\mathcal{F}_{p,w}=\Delta_{p,w}$ as given by \eqref{exlap}, Theorem \ref{thm1new} extends \cite[Theorem $2.7$]{BGaniso}. Further for $g=0$, Theorem \ref{thm1new} extends \cite[Theorem $5.2$]{BocOr} with $p=2$; \cite[Lemma $4.3$]{DeCave} for $1<p<N$; \cite[Theorem $4.6$]{Canino} for $1<p<\infty$ and extends \cite[Theorem $3.2$]{G} for $1<p<\infty$.
\end{Remark}

\begin{Remark}\label{thm1newrmk2}
If $\mathcal{F}_{p,w}=\mathcal{S}_{p,w}$ as given by \eqref{exlap}, then, for $g=0$, Theorem \ref{thm1new} extends \cite[Theorem $3.2$]{Mirivar} to the weighted case for $w\in W_p^{s}$, improves the range of $f$ and the restriction $p\geq 2$ to any $1<p<\infty$. Furthermore, Theorem \ref{thm1new} extends \cite[Theorem $2.9$]{BGaniso} to the weighted case for $w\in W_p^{s}$ and also improves the restriction $p\geq 2$ to any $1<p<\infty$.
\end{Remark}



\begin{Remark}\label{rmk}
If $g=0$, we denote the solutions $u_{\delta,\gamma}$ found in Theorem \ref{thm1} and \ref{thm1new} by $u_\delta$.
\end{Remark}
Now, we present our weighted anisotropic Sobolev inequality with extremal associated with $u_\delta$ as follows:
\begin{Theorem}\label{thm2}
Let $0<\delta<1$ and $2\leq p<\infty$. Assume that $w\in W_p^{s}$, for some $s\in I$ and $f\in L^{m_{\delta}}(\Omega)\setminus\{0\}$ be nonnegative, where $m_{\delta}$ is given by Theorem \ref{thm1}. 
Then,
\begin{enumerate} 
\item[(a)] 
\begin{align*}
\mu(\Omega)&:=\inf_{u\in W_0^{1,p}(\Om,w)}\Bigg\{\int_{\Omega}\mathcal{F}(\nabla u)^p w\,dx:\int_{\Omega}|u|^{1-\delta}f\,dx=1\Bigg\}\\
&=\left(\int_{\Omega}\mathcal{F}(\nabla u_\delta)^p w\,dx\right)^\frac{1-\delta-p}{1-\delta}.
\end{align*}
\item[(b)] Moreover, for every $v\in W_0^{1,p}(\Om,w)$, the following Sobolev type inequality
\begin{equation}\label{inequality1}
C\Big(\int_{\Omega}|v|^{1-\delta}f\,dx\Big)^\frac{p}{1-\delta}\leq\int_{\Omega}\mathcal{F}(\nabla v)^p w\,dx,
\end{equation}
holds, if and only if $$C\leq \mu(\Omega).$$
\end{enumerate}
\end{Theorem}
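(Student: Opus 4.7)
The plan is a Picone-type hidden convexity argument coupled with a H\"older estimate, applied to the solution $u_\delta$ of $\mathcal{(S)}$ with $g=0$ provided by Theorem \ref{thm1}. Testing the equation for $u_\delta$ against $u_\delta$ itself (via a standard density argument) and using Lemma \ref{Happ}(A), one obtains $T:=\int_\Omega \mathcal{F}(\nabla u_\delta)^p w\,dx = \int_\Omega f u_\delta^{1-\delta}\,dx$. Setting $v_\delta := u_\delta/T^{1/(1-\delta)}$ then yields $\int_\Omega f v_\delta^{1-\delta}\,dx=1$ and $\int_\Omega \mathcal{F}(\nabla v_\delta)^p w\,dx = T^{(1-\delta-p)/(1-\delta)}$, which exhibits the candidate extremal and furnishes the upper bound $\mu(\Omega)\leq T^{(1-\delta-p)/(1-\delta)}$.

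The main step is the matching lower bound. Fix a competitor $v\in W_0^{1,p}(\Omega,w)$ with $\int_\Omega f|v|^{1-\delta}\,dx=1$; since $\mathcal{F}(\nabla|v|)=\mathcal{F}(\nabla v)$ a.e.\ by $(H2)$, one may assume $v\geq 0$. For $k,n\in\mathbb{N}$ let $v_k:=\min\{v,k\}$ and use $\phi_{k,n}:=v_k^p/(u_\delta+1/n)^{p-1}$ as a test function in the weak formulation of $u_\delta$. Expanding $\nabla\phi_{k,n}$ and combining the convexity bound $\nabla_\eta\mathcal{F}(\nabla u_\delta)\cdot\nabla v_k\leq \mathcal{F}(\nabla v_k)$ (a consequence of the convexity of $\mathcal{F}$ together with Lemma \ref{Happ}(A)) with the Young inequality $p a^{p-1} b\leq (p-1)a^p+b^p$ applied to $a=v_k\mathcal{F}(\nabla u_\delta)/(u_\delta+1/n)$, $b=\mathcal{F}(\nabla v_k)$, the ``bad'' terms $v_k^p\mathcal{F}(\nabla u_\delta)^p/(u_\delta+1/n)^p$ cancel and one obtains the pointwise estimate
\begin{equation*}
w\,\mathcal{F}(\nabla u_\delta)^{p-1}\,\nabla_\eta\mathcal{F}(\nabla u_\delta)\cdot\nabla\phi_{k,n}\;\leq\;w\,\mathcal{F}(\nabla v_k)^p.
\end{equation*}
Integrating, invoking the equation for $u_\delta$, and passing $n\to\infty$ then $k\to\infty$ by monotone convergence yields
\begin{equation*}
\int_\Omega \frac{f\,v^p}{u_\delta^{\,\delta+p-1}}\,dx\;\leq\;\int_\Omega \mathcal{F}(\nabla v)^p w\,dx.
\end{equation*}

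H\"older's inequality with conjugate exponents $p/(1-\delta)$ and $p/(p+\delta-1)$, applied after splitting $f v^{1-\delta}=\bigl(f^{(1-\delta)/p} v^{1-\delta} u_\delta^{-(1-\delta)(p+\delta-1)/p}\bigr)\bigl(f^{(p+\delta-1)/p} u_\delta^{(1-\delta)(p+\delta-1)/p}\bigr)$, then gives
\[
\int_\Omega f v^{1-\delta}\,dx\;\leq\;\Bigl(\int_\Omega \tfrac{f v^p}{u_\delta^{\delta+p-1}}\,dx\Bigr)^{(1-\delta)/p} T^{(p+\delta-1)/p},
\]
the second factor collapsing to $T$ via $\int_\Omega f u_\delta^{1-\delta}\,dx=T$. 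Combined with the previous display and raised to the $p/(1-\delta)$ power, this becomes $1\leq T^{(p+\delta-1)/(1-\delta)}\int_\Omega \mathcal{F}(\nabla v)^p w\,dx$, i.e.\ $\int_\Omega \mathcal{F}(\nabla v)^p w\,dx\geq T^{(1-\delta-p)/(1-\delta)}$, matching the upper bound and proving (a). Part (b) then follows by scaling: for any nonzero $v$, $\tilde v:=v/(\int_\Omega f|v|^{1-\delta}\,dx)^{1/(1-\delta)}$ is admissible in (a), whence $\mu(\Omega)\bigl(\int_\Omega f|v|^{1-\delta}\,dx\bigr)^{p/(1-\delta)}\leq \int_\Omega \mathcal{F}(\nabla v)^p w\,dx$. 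Therefore any $C\leq \mu(\Omega)$ verifies \eqref{inequality1}, while plugging $v=v_\delta$ into \eqref{inequality1} forces $C\leq \mu(\Omega)$.

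The main obstacle is the test-function admissibility in the key step: $\phi_{k,n}$ does not lie in $C_c^1(\Omega)$, so one must approximate $v_k$ by smooth compactly supported functions and verify that the limit weak equation still holds for $\phi_{k,n}$. This is possible because $u_\delta\geq c_\omega>0$ on compacts (from the weak-solution definition) keeps the denominator bounded away from zero, $v_k$ is bounded so $\phi_{k,n}\in W_0^{1,p}(\Omega,w)\cap L^\infty(\Omega)$, and the embedding of Lemma \ref{embedding} supplies the integrability needed for dominated/monotone convergence. The rest is careful exponent bookkeeping.
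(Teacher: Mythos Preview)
Your argument is correct and follows a genuinely different route from the paper's. For the lower bound in (a), the paper invokes Lemma \ref{lemma1}(d): $u_\delta$ minimizes the energy functional $I_{\delta,0}(v)=\frac{1}{p}\|v\|^p-\frac{1}{1-\delta}\int_\Omega (v^+)^{1-\delta}f\,dx$, and comparing $I_{\delta,0}(u_\delta)$ with $I_{\delta,0}(\mu|v|)$ at the specific scale $\mu=\|v\|^{-p/(p+\delta-1)}$ yields the inequality in one line. You instead run an anisotropic Picone identity (hidden convexity of $\mathcal{F}^p$ via Lemma \ref{Happ}(A),(D)) against $u_\delta$ and follow it with a H\"older splitting, using only the PDE for $u_\delta$ and never touching the minimizing property. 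The trade-off: the paper's route is shorter once Lemma \ref{lemma1}(d) is available, but that lemma in turn rests on the full strong-convergence analysis of the approximants $u_n$; your route is more self-contained and transplants verbatim to situations where a variational characterization of the singular solution is not at hand. Your identification of test-function admissibility as the main obstacle is accurate, and the cure you sketch (truncation, density, the uniform interior bound $u_\delta\geq c_\omega$, monotone convergence in $n,k$) is standard and adequate --- the paper confronts the same issue when it inserts $u_\delta$ as a test function in \eqref{useful1} and resolves it by the same appeal to density. Part (b) is handled identically in both approaches.
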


\begin{Theorem}\label{thm2new}
Let $w\in W_p^{s}$ for some $s\in I$. If $\mathcal{F}(x)=\mathcal{F}_{2}(x)$ or $\mathcal{F}(x)=\mathcal{F}_{p}(x)$, as given by \eqref{ex1}, then Theorem \ref{thm2} holds for any $1<p<\infty$.
\end{Theorem}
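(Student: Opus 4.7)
The plan is to follow the proof of Theorem \ref{thm2} for the two distinguished choices $\mathcal{F}=\mathcal{F}_2$ and $\mathcal{F}=\mathcal{F}_p$, for which the operator $\mathcal{F}_{p,w}$ reduces via \eqref{exlap} to the weighted $p$-Laplacian $\Delta_{p,w}$ and to the weighted pseudo $p$-Laplacian $\mathcal{S}_{p,w}$ respectively. The restriction $p\geq 2$ in Theorem \ref{thm2} entered only through the Finsler algebraic inequality (Lemma \ref{alg}), and for these two special norms adequate substitutes are available in the full range $1<p<\infty$. Crucially, Theorem \ref{thm1new} already delivers, for every such $p$, existence and uniqueness of the singular minimizer $u_\delta\in W_0^{1,p}(\Omega,w)$ solving the companion problem $-\mathcal{F}_{p,w}u=fu^{-\delta}$ (corresponding to $g=0$ in Remark \ref{rmk}).

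With $u_\delta$ at hand, first I would test its equation against $u_\delta$ itself (justified by the interior positivity $u_\delta\geq c_\omega>0$ and a standard density argument) and use Lemma \ref{Happ}(A) to obtain
\[
A:=\int_{\Omega}\mathcal{F}(\nabla u_\delta)^{p}w\,dx=\int_{\Omega}f\,u_\delta^{1-\delta}\,dx.
\]
Setting $v_0:=A^{-1/(1-\delta)}u_\delta$, one verifies $\int_{\Omega}|v_0|^{1-\delta}f\,dx=1$ and $\int_{\Omega}\mathcal{F}(\nabla v_0)^pw\,dx=A^{(1-\delta-p)/(1-\delta)}$, whence $\mu(\Omega)\leq A^{(1-\delta-p)/(1-\delta)}$. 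For the reverse inequality I would recast the problem via the scale-invariant Rayleigh quotient
\[
\mu(\Omega)=\inf_{0\neq u\in W_0^{1,p}(\Omega,w)}\frac{\int_{\Omega}\mathcal{F}(\nabla u)^pw\,dx}{\bigl(\int_{\Omega}f|u|^{1-\delta}\,dx\bigr)^{p/(1-\delta)}},
\]
and exploit the uniqueness of $u_\delta$ together with the Lagrange-multiplier characterization: every minimizer $v$ of the above quotient is, after positive rescaling, a solution of the singular equation and therefore coincides with $u_\delta$. This forces the matching lower bound $\int_{\Omega}\mathcal{F}(\nabla v)^pw\,dx\geq A^{(1-\delta-p)/(1-\delta)}$ for every admissible competitor $v$, settling (a). Part (b) is then immediate: $C\leq\mu(\Omega)$ yields \eqref{inequality1} by the very definition of $\mu(\Omega)$, while inserting $v_0$ into \eqref{inequality1} gives the converse.

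The main obstacle is to supply a monotonicity/convexity substitute for Lemma \ref{alg} when $1<p<2$. For $\mathcal{F}=\mathcal{F}_2$ this is precisely the sub-quadratic case of Lemma \ref{AI},
\[
\langle|a|^{p-2}a-|b|^{p-2}b,\,a-b\rangle\geq \frac{C|a-b|^2}{(|a|+|b|)^{2-p}}>0\quad\text{for all }a\neq b\in\mathbb{R}^N,
\]
from which strict monotonicity of $\Delta_{p,w}$ and strict convexity of $u\mapsto\int_{\Omega}|\nabla u|^pw\,dx$ follow at once. For $\mathcal{F}=\mathcal{F}_p$, the explicit formula $\mathcal{F}_p(x)^{p-1}\nabla_\eta\mathcal{F}_p(x)=(|x_i|^{p-2}x_i)_{i=1}^N$ permits the same inequality to be applied component-wise, yielding strict monotonicity of $\mathcal{S}_{p,w}$ and strict convexity of $u\mapsto\int_{\Omega}\sum_{i=1}^N|\partial_iu|^pw\,dx$ for every $1<p<\infty$. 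With these two inputs in place the argument of Theorem \ref{thm2} transfers verbatim and yields Theorem \ref{thm2new}.
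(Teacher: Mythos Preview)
Your identification of the algebraic substitute (Lemma~\ref{AI}, applied componentwise for $\mathcal{F}_p$) is exactly what the paper uses; via Remarks~\ref{auxrmk} and~\ref{anisormk} it pushes all the auxiliary lemmas through for $1<p<\infty$, and then the paper simply repeats the proof of Theorem~\ref{thm2} verbatim. Your upper bound for $\mu(\Omega)$ and your treatment of part~(b) also match the paper.

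Where you diverge is in the reverse inequality of part~(a). The paper does \emph{not} argue via a Lagrange-multiplier characterization of the Rayleigh-quotient minimizer. Instead it invokes Lemma~\ref{lemma1}(d): the solution $u_\delta$ is the global minimizer of the \emph{unconstrained} energy
\[
I_{\delta}(v)=\frac{1}{p}\int_{\Omega}\mathcal{F}(\nabla v)^p w\,dx-\frac{1}{1-\delta}\int_{\Omega}(v^+)^{1-\delta}f\,dx,
\]
a fact obtained through the approximation scheme (Lemma~\ref{exisapprox}) and the strong convergence in Lemma~\ref{lemma1}(c). One then plugs the competitor $\mu|v|$ with $\mu=\|v\|^{-p/(p+\delta-1)}$ into $I_\delta(u_\delta)\leq I_\delta(\mu|v|)$ and reads off the bound $\|u_\delta\|^{p(1-\delta-p)/(1-\delta)}\leq\|v\|^p$ for every $v\in S_\delta$.

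Your proposed route has a genuine gap at the step ``every minimizer of the quotient is, after rescaling, a solution of the singular equation.'' Deriving the Euler--Lagrange equation here is precisely the delicate point of the whole theory: the constraint functional $v\mapsto\int_\Omega f|v|^{1-\delta}\,dx$ is not G\^ateaux differentiable on $\{v=0\}$, so one cannot simply invoke Lagrange multipliers. Making this rigorous requires either showing a priori that the minimizer is strictly positive (which itself typically needs an approximation argument and the strong maximum principle), or reverting to the regularized problems---which brings you back to the paper's approach via Lemma~\ref{lemma1}. As written, your lower-bound argument is circular: it presupposes the very singular-problem machinery that the approximation lemmas are designed to supply.
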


\begin{Remark}\label{thm2newrmk}
When $\mathcal{F}(x)=\mathcal{F}_2(x)$, then Theorem \ref{thm2new} extends \cite[Theorem $2.11$]{BGaniso}. Moreover, if $\mathcal{F}(x)=\mathcal{F}_p(x)$, then Theorem \ref{thm2new} extends \cite[Theorem $2.13$]{BGaniso} to the weighted anisotropic case for $w\in W_p^{s}$ and further improves the range of $p\geq 2$ to any $1<p<\infty$.
\end{Remark}



\begin{Remark}\label{ansiowgtrmk}
Both Theorem \ref{thm2} and \ref{thm2new} gives
\begin{equation}\label{anisowgtrmk2}
\begin{split}
\mu(\Omega)=\|V_{\delta}\|^p=\int_{\Omega}\mathcal{F}(\nabla V_{\delta})^p w\,dx,
\end{split}
\end{equation}
where $V_{\delta}:=\zeta_{\delta}u_{\delta}\in W_0^{1,p}(\Om,w)$ for 
$$
\zeta_{\delta}=\Bigg(\int_{\Omega}u_{\delta}^{1-\delta}f\,dx\Bigg)^{-\frac{1}{1-\delta}}.
$$
Moreover, $V_{\delta}$ satisfies
$$
\int_{\Omega}V_{\delta}^{1-\delta}f\,dx=1,
$$
and the equation
\begin{equation}\label{rmkeqn}
-\mathcal{F}_{p,w} V_{\delta}=\mu(\Omega)f V_{\delta}^{-\delta}\text{ in }\Omega,\,V_{\delta}>0 \text{ in }\Omega.
\end{equation}
\end{Remark}

Our final results concerning the problem $\mathcal{(R)}$ are stated as follows:

\begin{Theorem}\label{ethm}
Let $2\leq p<\infty$ and $w\in W_p^{s}$ for some $s\in I$. Assume that 
\begin{enumerate}
\item[(a)] $h\in L^t(\Om)\setminus\{0\}$ is nonnegative for $t>\frac{p_s^{*}}{p_s^{*}-p}$, if $1\leq p_s<N$ and
\item[(b)] $h\in L^t(\Om)\setminus\{0\}$ is nonnegative for $t>\frac{r}{r-p}$, if $p_s\geq N$ and $r>p$. 
\end{enumerate}
Then the problem $\mathcal{(R)}$ admits a weak solution $v\in W^{1,p}_{\mathrm{loc}}(\Om,w)\cap L^\infty(\Om)$.
\end{Theorem}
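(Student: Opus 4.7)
The strategy, inspired by Perera--Silva \cite{PeSil}, is to tame the exponential singularity by a parameter approximation, pass to a monotone limit, and exploit the $L^{t}$-integrability of $h$ to extract a limit that is simultaneously in $L^{\infty}(\Om)$ and a pointwise weak solution. For each $n\in\mathbb{N}$, set $h_{n}:=\min\{h,n\}$ and consider
\[
-\mathcal{F}_{p,w}u=h_{n}(x)\,e^{\,1/(u^{+}+1/n)}\text{ in }\Omega,\qquad u=0\text{ on }\partial\Omega.
\]
The right-hand side is bounded by $ne^{n}$ and continuous in $u$, so the strict monotonicity \eqref{regapp1} combined with the compact embedding of Lemma \ref{embedding} yields, via Schauder's fixed-point theorem applied to the obvious solution operator, a nonnegative $u_{n}\in W_{0}^{1,p}(\Omega,w)\cap L^{\infty}(\Omega)$. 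The strong maximum principle gives $u_{n}>0$ in $\Omega$, and the monotonicity of $n\mapsto h_{n}e^{1/(\cdot+1/n)}$ together with weak comparison yields $u_{n+1}\geq u_{n}$.

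The interior positivity of the approximants is obtained by comparison: since $h_{n}e^{1/(u_{n}+1/n)}\geq h_{1}e^{-1}$, $u_{n}$ dominates the positive solution $u^{\ast}$ of $-\mathcal{F}_{p,w}u^{\ast}=h_{1}e^{-1}$ with zero boundary data, and therefore $u_{n}\geq u^{\ast}\geq c_{\omega}>0$ on any $\omega\Subset\Omega$ uniformly in $n$. The critical uniform $L^{\infty}$ bound comes from a Stampacchia iteration restricted to superlevel sets: for $k\geq M\geq 1$, testing the equation for $u_{n}$ with $(u_{n}-k)^{+}\in W_{0}^{1,p}(\Omega,w)$ and using $e^{1/(u_{n}+1/n)}\leq e^{1/M}$ on $\{u_{n}>k\}$, Corollary \ref{regapp} gives
\[
C\int_{\Omega}|\nabla(u_{n}-k)^{+}|^{p}w\,dx\leq e^{1/M}\int_{\Omega}h(x)(u_{n}-k)^{+}\,dx.
\]
The integrability hypotheses $t>p_{s}^{\ast}/(p_{s}^{\ast}-p)$ (and $t>r/(r-p)$ in the borderline case) are precisely those required to close a classical Moser--Stampacchia level-set iteration through Lemma \ref{embedding}, yielding $\|u_{n}\|_{L^{\infty}(\Omega)}\leq M+K(M,\|h\|_{L^{t}})$ uniformly in $n$.

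Set $u:=\lim_{n}u_{n}\in L^{\infty}(\Omega)$. On each $\omega\Subset\Omega$, the dominating function $he^{1/c_{\omega}}\in L^{t}(\omega)$ controls $h_{n}e^{1/(u_{n}+1/n)}$, so applying Lemma \ref{alg} with Minty's trick and local weak compactness in $W^{1,p}(\omega,w)$ gives $u_{n}\to u$ strongly in $W^{1,p}(\omega,w)$ and $\nabla u_{n}\to\nabla u$ a.e.\ in $\Omega$. Dominated convergence then allows passage to the limit in the weak formulation, showing that $u\in W^{1,p}_{\mathrm{loc}}(\Omega,w)\cap L^{\infty}(\Omega)$ satisfies \eqref{weakform2}. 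For the generalized boundary condition, $(u_{n}-\epsilon)^{+}$ is a legitimate test function for every $\epsilon>0$, and the bound $e^{1/(u_{n}+1/n)}\leq e^{1/\epsilon}$ on $\{u_{n}>\epsilon\}$ yields a uniform $\|(u_{n}-\epsilon)^{+}\|_{W_{0}^{1,p}(\Omega,w)}$ bound; weak lower semicontinuity then gives $(u-\epsilon)^{+}\in W_{0}^{1,p}(\Omega,w)$.

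The hardest point is the uniform $L^{\infty}$ bound, since $e^{1/u_{n}}$ has no a priori pointwise control near $\partial\Omega$. The remedy is to run the iteration only above the threshold $M$, where the exponential is automatically dominated by the constant $e^{1/M}$; this decouples the estimate from the boundary singularity and uses exactly the integrability gap built into the hypotheses on $h$. Once this bound is in hand, interior positivity (ensuring the right-hand side stays bounded on compact subsets) makes the passage to the limit and the verification of \eqref{weakform2} routine.
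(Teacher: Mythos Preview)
Your proposal is correct and follows essentially the same route as the paper: approximate by \eqref{eapprox}, invoke Lemma~\ref{eexisapprox} for existence, monotonicity, uniform interior positivity and the uniform $L^\infty$ bound (via the Stampacchia iteration above level $k\geq 1$, where the exponential is harmless), then pass to the limit locally and verify $(u-\epsilon)^+\in W_0^{1,p}(\Omega,w)$ by testing with $(u_n-\epsilon)^+$. The only cosmetic differences are that the paper carries out an explicit domain exhaustion $\Omega=\bigcup_l\Omega_l$ with a diagonal subsequence and obtains strong $W^{1,p}_{\mathrm{loc}}$ convergence by a direct Cauchy argument with cutoffs and Lemma~\ref{alg}, whereas you take the monotone pointwise limit and appeal to a Minty-type argument; both yield the a.e.\ gradient convergence needed to pass to the limit in \eqref{weakform2}.
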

Moreover, we have
\begin{Theorem}\label{ethmnew}
If $w\in W_p^{s}$ for some $s\in I$ and $\mathcal{F}_{p,w}=\Delta_{p,w}$ or $\mathcal{S}_{p,w}$ as given by \eqref{exlap}, then Theorem \ref{ethm} holds, for any $1<p<\infty$.
\end{Theorem}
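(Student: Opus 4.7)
The plan is to revisit the proof of Theorem \ref{ethm} and isolate the one step where $p\geq 2$ was used, then supply an alternative argument valid for $1<p<2$ in the two specific cases $\mathcal{F}_{p,w}=\Delta_{p,w}$ and $\mathcal{F}_{p,w}=\mathcal{S}_{p,w}$. Following the Perera--Silva domain exhaustion already used in Theorem \ref{ethm}, I would choose an increasing sequence $\Omega_n\Subset\Omega_{n+1}$ with $\bigcup_n\Omega_n=\Omega$, and on each $\Omega_n$ solve
$$
-\mathcal{F}_{p,w} u_n=h(x)e^{1/(u_n+1/n)}\text{ in }\Omega_n,\quad u_n=0\text{ on }\partial\Omega_n,
$$
extending $u_n$ by zero to $\Omega$. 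The existence of $u_n$, the uniform $L^\infty(\Omega)$ bound under (a)--(b), and the locally uniform positive lower bound $u_n\geq c_\omega>0$ on every $\omega\Subset\Omega$ all follow from Moser/Stampacchia iteration and the weak comparison principle; these arguments use only Corollary \ref{regapp} and hence are valid for any $1<p<\infty$ and any Finsler norm.

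The single obstruction is the strong convergence $\nabla u_n\to\nabla v$ in $L^p(\omega,w)$ for $\omega\Subset\Omega$, which in Theorem \ref{ethm} was obtained via Lemma \ref{alg} and thus required $p\geq 2$. For $\mathcal{F}_{p,w}=\Delta_{p,w}$ and $1<p<2$, I would instead apply the second case of Lemma \ref{AI}, rearranged as
$$
|a-b|^p\leq C\,\bigl\langle|a|^{p-2}a-|b|^{p-2}b,\,a-b\bigr\rangle^{p/2}(|a|+|b|)^{p(2-p)/2}.
$$
Testing the difference of the equations for $u_n$ and the weak limit $v$ against $(u_n-v)\xi$ for a smooth cutoff $\xi\in C_c^\infty(\Omega_N)$ with $\xi\equiv 1$ on $\omega$, and using H\"{o}lder with exponents $2/p$ and $2/(2-p)$ together with the uniform local bound on $\|\nabla u_n\|_{L^p(\omega,w)}$, one extracts a subsequence with $\nabla u_n\to\nabla v$ in $L^p(\omega,w)$. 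For $\mathcal{F}_{p,w}=\mathcal{S}_{p,w}$, the principal part is diagonal in the partial derivatives, so the same scalar inequality is applied component-wise to each $\partial_i u_n-\partial_i v$ and the conclusion is identical.

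The hard part is the compatibility with the exponential right-hand side: the test function $(u_n-v)\xi$ is only uniformly bounded, not small, so care is needed in controlling the singular term. The pointwise lower bound $u_n\geq c_\omega>0$ on $\mathrm{supp}\,\xi$ gives $h\,e^{1/u_n}\leq h\,e^{1/c_\omega}\in L^1(\mathrm{supp}\,\xi)$, so dominated convergence handles this contribution and, separately, allows us to pass to the limit in the integral $\int h\,e^{1/u_n}\phi\,dx$ against any $\phi\in C_c^1(\Omega)$. Combined with the strong gradient convergence just established and the continuity of $z\mapsto|z|^{p-2}z$ (resp.\ its component-wise analogue in the pseudo case), we can pass to the limit in \eqref{weakform2} for all $\phi\in C_c^1(\Omega)$. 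The remaining verifications, namely $(v-\varepsilon)^+\in W_0^{1,p}(\Omega,w)$ for every $\varepsilon>0$ and the locally uniform positivity of $v$, are inherited from the corresponding properties of $u_n$ together with the uniform $L^\infty$ bound, exactly as in the proof of Theorem \ref{ethm}, and require no modification.
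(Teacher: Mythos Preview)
Your core idea---replace Lemma \ref{alg} by the $1<p<2$ case of Lemma \ref{AI}, rearranged and combined with H\"older with exponents $2/p$ and $2/(2-p)$---is exactly what the paper does: its proof of Theorem \ref{ethmnew} consists of invoking Lemma \ref{AI} together with Remark \ref{ermk} and rerunning the proof of Theorem \ref{ethm} verbatim. The component-wise application for $\mathcal{S}_{p,w}$ is also correct.

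There is, however, one genuine gap in your description of the strong local gradient convergence. You propose to test ``the difference of the equations for $u_n$ and the weak limit $v$'' against $(u_n-v)\xi$, but $v$ is not yet known to satisfy any equation---that is precisely what remains to be proved. The paper avoids this circularity by running a Cauchy argument between two approximants $v_{n_l}$ and $v_{n_m}$ (see \eqref{estlim} and the estimates of $I$ and $J$): both satisfy \eqref{eapprox}, so one can legitimately subtract the two equations, test with $\phi(v_{n_l}-v_{n_m})$ for a cutoff $\phi$, and then apply Lemma \ref{AI} together with your H\"older trick to conclude that $\{\nabla v_{n_l}\}$ is Cauchy in $L^p(\omega,w)$. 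You could also repair your own version by testing only the equation for $u_n$ with $(u_n-v)\xi$ and then adding and subtracting the flux term $\int_{\Omega} w|\nabla v|^{p-2}\nabla v\cdot\nabla\big((u_n-v)\xi\big)\,dx$, which tends to zero by weak convergence; but as written, the step is circular.

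A secondary remark: your approximation scheme (solve on $\Omega_n$ with zero boundary data and extend by zero) differs from the paper's, which solves \eqref{eapprox} on all of $\Omega$ with the truncation $h_n=\min\{h,n\}$ and uses the exhaustion $\Omega_l$ only to organize the local convergence. Both routes work, but in yours the lower bound $u_n\geq c_\omega$ on a fixed $\omega\Subset\Omega$ only holds once $n$ is large enough that $\omega\Subset\Omega_n$; this is harmless for the limit but should be stated explicitly.
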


\begin{Remark}\label{allrmk}
If $0<c\leq w\leq d$ for some constants $c$ and $d$, then noting Remark \ref{Embrmk} our main results Theorem \ref{thm1}-\ref{thm1new}, \ref{thm2}, \ref{thm2new}, \ref{ethm}-\ref{ethmnew} holds by replacing $p_s$ with $p$.  
\end{Remark}
\section{Auxiliary results for the problem $\mathcal{(S)}$ and for the weighted anisotropic Sobolev inequality}
Throughout the rest of the article, we assume $1<p<\infty$ and $w\in W_p^{s}$ for some $s\in I$ unless otherwise mentioned. We recall that by $\|u\|$ we denote the norm $\|u\|_{W_0^{1,p}(\Om,w)}$ as defined in \eqref{equinorm}. For $n\in\mathbb{N}$, we investigate the following approximated problem 
\begin{equation}\label{mainapprox}
\begin{split}
-\mathcal{F}_{p,w}u=\frac{f_{n}}{(u^{+}+\frac{1}{n})^{\delta}}+\frac{g_{n}}{(u^{+}+\frac{1}{n})^{\gamma}}\text{ in }\Omega,\quad u=0\text{ on }\partial\Omega,
\end{split}
\end{equation}
where $f_{n}(x)=\min\{f(x),n\}$ and $g_{n}(x)=\min\{g(x),n\}$, provided $(f,g)\big(\neq(0,0)\big)\in L^{m_{\delta}}(\Omega)\times L^{r_{\gamma}}(\Omega)$ is nonnegative, where $m_\delta$ and $r_\gamma$ is given by Theorem \ref{thm1}. First we prove the following result that is important to obtain the existence and further qualitative properties of solutions for the problem \eqref{mainapprox} as shown in Lemma \ref{exisapprox}.
\begin{Lemma}\label{auxresult}
Let $2\leq p<\infty$ and $\xi\in L^{\infty}(\Om)\setminus\{0\}$ be nonnegative in $\Om$. Then there exists a unique solution $u\in W_0^{1,p}(\Om,w)\cap L^{\infty}(\Om)$ of the problem
\begin{equation}\label{auxresulteqn}
\begin{split}
-\mathcal{F}_{p,w}u=\xi\text{ in }\Om,\quad u>0\text{ in }\Om,
\end{split}
\end{equation}
such that for every $\omega\Subset\Om$, there exists a constant $c_{\omega}$, satisfying $u\geq c_{\omega}>0$ in $\omega$.
\end{Lemma}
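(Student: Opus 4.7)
The plan is to obtain $u$ as the minimizer of the strictly convex energy
\[
J(v) = \frac{1}{p}\int_\Omega \mathcal{F}(\nabla v)^p w\,dx - \int_\Omega \xi v\,dx
\]
on $W_0^{1,p}(\Omega,w)$, and then upgrade its properties to nonnegativity, boundedness, the local lower bound, and uniqueness. Coercivity of $J$ follows from the lower bound $\mathcal{F}(\nabla v)^p \geq C_1^p |\nabla v|^p$ in Remark \ref{hypormk2} together with the embedding in Lemma \ref{embedding} applied to control $\int_\Omega \xi v\,dx \leq \|\xi\|_\infty \|v\|_{L^1}$. Convexity of $\mathcal{F}^p$ (from (H2), (H4)) yields weak lower semicontinuity of $J$, and the direct method produces a minimizer $u$ which, by computing $\langle J'(u),\phi\rangle=0$ for $\phi\in C_c^\infty(\Omega)$, is a weak solution of \eqref{auxresulteqn}.

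To obtain $u\geq 0$, I would test the weak formulation with $\phi = \min(u,0) = -u^- \in W_0^{1,p}(\Omega,w)$. The right-hand side becomes $\int_{\{u<0\}} \xi u\,dx \leq 0$, while Lemma \ref{Happ}(A) rewrites the left-hand side as $\int_{\{u<0\}} w\,\mathcal{F}(\nabla u)^p\,dx \geq 0$. Both sides therefore vanish, forcing $\nabla u^- \equiv 0$ (by Remark \ref{hypormk2}) and hence $u^-\equiv 0$ via Poincar\'e. For the $L^\infty$-bound, I would run a Stampacchia/De Giorgi iteration: testing with $(u-k)^+$, using $\mathcal{F}(\nabla u)^p \geq C_1^p |\nabla u|^p$, and applying the weighted Sobolev embedding of Lemma \ref{embedding} yields a decay estimate for $|A_k|=|\{u>k\}|$ which iterates to $u\in L^\infty(\Omega)$ with a bound depending on $\|\xi\|_\infty$, $w$, and $\Omega$.

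The local positivity is the most delicate point. Since $\xi\geq 0$ and $\xi\not\equiv 0$, there is a ball $B\Subset\Omega$ on which $\xi\geq c_0>0$; one then propagates positivity to $u\geq c_\omega >0$ on every $\omega\Subset\Omega$ via a strong maximum principle (or weak Harnack inequality) for $\mathcal{F}_{p,w}$. Such a principle follows by combining the Harnack theory for $p$-admissible weights developed in \cite{Juh} with the Finsler structure handled as in \cite{BGM}; alternatively, one can construct an explicit subsolution on a ball exhausting $\omega$ using the comparability of $\mathcal{F}$ to the Euclidean norm from Remark \ref{hypormk2} and compare it with $u$.

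Uniqueness is obtained by subtracting the weak formulations of two solutions $u_1,u_2$ and testing with $u_1-u_2 \in W_0^{1,p}(\Omega,w)$. Invoking the Finsler algebraic inequality Lemma \ref{alg}, whose hypothesis $p\geq 2$ matches the standing assumption, the resulting identity yields $\int_\Omega w\,\mathcal{F}(\nabla(u_1-u_2))^p\,dx = 0$, hence $u_1 = u_2$ a.e. The main obstacle in the whole argument is establishing the strong maximum principle in the weighted anisotropic setting; the remaining steps are routine adaptations of classical $p$-Laplace techniques once the weight $w$ and the Finsler norm $\mathcal{F}$ are handled via Remark \ref{hypormk2} and Lemma \ref{Happ}.
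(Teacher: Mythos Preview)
Your proposal is correct and follows essentially the same route as the paper: existence by minimizing the energy $J$ (coercivity, convexity, weak lower semicontinuity), nonnegativity by testing with $u_-$, $L^\infty$-boundedness by a Stampacchia-type iteration (the paper defers this to \cite[Lemma~3.1]{BGM}), and uniqueness via Lemma~\ref{alg}. The only point worth noting is the local positivity, which you flag as delicate: the paper handles it in one line by observing that the structural bounds \eqref{lbd}--\eqref{regapp1} in Corollary~\ref{regapp} place $\mathcal{F}_{p,w}$ squarely inside the $\mathcal{A}$-harmonic framework of \cite{Juh}, so the weak Harnack inequality \cite[Theorem~3.59]{Juh} applies directly---no separate ``Finsler plus weight'' combination or subsolution construction is needed.
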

\begin{proof}
\textbf{Existence:} To prove the existence, we define the energy functional $I:W_0^{1,p}(\Om,w)\to\mathbb{R}$ by
$$
I(u):=\frac{1}{p}\int_{\Om}\mathcal{F}(\nabla u)^p w\,dx-\int_{\Om}\xi u\,dx.
$$
Since $\xi\in L^\infty(\Om)$, using Lemma \ref{embedding}, we have
\begin{equation}\label{coercive}
\begin{split}
I(u)&\geq\frac{\|u\|^p}{p}-C|\Om|^\frac{p-1}{p}\|\xi\|_{L^\infty(\Om)}\|u\|,
\end{split}
\end{equation}
where $C>0$ is the Sobolev constant. Thus $I$ is coercive, since $p>1$. Next, we observe that $I$ is also convex. Indeed, let us define the energy functional $I_1:W_0^{1,p}(\Om,w)\to\mathbb{R}$ by
$$
I_1(u):=\frac{1}{p}\int_{\Om}\mathcal{F}(\nabla u)^p w\,dx,
$$
and $I_2:W_0^{1,p}(\Om,w)\to\mathbb{R}$ by

$$
I_2(u):=-\int_{\Om}\xi u\,dx.
$$

By the property $(D)$ of Lemma \ref{Happ}, we have $\mathcal{F}^p$ is convex and hence $I_1$ is convex. It is easy to see that $I_2$ is linear and thus $I$ is convex. Also, $I$ is a $C^1$ functional. Therefore, $I$ is weakly lower semicontinous. As a consequence of coercivity, weak lower semicontinuity and convexity, $I$ has a minimizer, say $u\in W_0^{1,p}(\Om,w)$ which solves the equation
\begin{equation}\label{auxeqn1}
-\mathcal{F}_{p,w}u=\xi\text{ in }\Om.
\end{equation}

\textbf{Uniqueness:} Let $u_1, u_2\in W_0^{1,p}(\Om,w)$ solves the problem \eqref{auxeqn1}. Therefore,
\begin{equation}\label{unif1}
\int_{\Om}w(x)\mathcal{F}(\nabla u_1)^{p-1}\nabla_{\eta}\mathcal{F}(\nabla u_1)\nabla\phi\,dx=\int_{\Om}\xi\phi\,dx,
\end{equation}
and 
\begin{equation}\label{unif22}
\int_{\Om}w(x)\mathcal{F}(\nabla u_2)^{p-1}\nabla_{\eta}\mathcal{F}(\nabla u_2)\nabla\phi\,dx=\int_{\Om}\xi\phi\,dx
\end{equation}
holds for every $\phi\in W_0^{1,p}(\Om,w)$. We choose $\phi=u_1-u_2$ and then subtracting \eqref{unif1} with \eqref{unif22}, we obtain
\begin{equation}\label{unif3}
\int_{\Om}w(x)\left\{\mathcal{F}(\nabla u_1)^{p-1}\nabla_{\eta}\mathcal{F}(\nabla u_1)-\mathcal{F}(\nabla u_2)^{p-1}\nabla_{\eta}\mathcal{F}(\nabla u_2)\right\}\nabla\big((u_1-u_2)\big)\,dx=0.
\end{equation}
By Lemma \ref{alg}, we get
$$
\int_{\Om}\big|\mathcal{F}\big(\nabla(u_1-u_2)\big)\big|^p w\,dx=0.
$$
Thus $u_1=u_2$ in $\Om$. Hence the uniqueness follows.\\
\textbf{Boundedness:} From Lemma \ref{embedding}, noting the continuity of $X\hookrightarrow L^l(\Omega)$ for some $l>p$ and then proceeding analogous to the proof of \cite[Lemma $3.1$]{BGM}, we obtain
$$
\|u\|_{L^\infty(\Omega)}\leq C,
$$
for some positive constant $C$ depending on $\|\xi\|_{L^\infty(\Om)}$. \\
\textbf{Positivity:} Choosing $u_-:=\min\{u,0\}$ as a test function in \eqref{auxeqn1} and since $\xi\geq 0$, by \eqref{lbd} we obtain
\begin{equation*}
\begin{split}
\int_{\Omega}\mathcal{F}\big(\nabla u_-\big)^p w\,dx=\int_{\Omega}\mathcal{F}(\nabla u)^{p-1}\nabla_{\eta}\mathcal{F}(\nabla u)\nabla u_- w\,dx=\int_{\Omega}\xi u_-\,dx\leq 0,
\end{split}
\end{equation*}
which gives, $u\geq 0$ in $\Omega$. Further $g\neq 0$ gives $u\neq 0$ in $\Omega$. Noting Corollary \ref{regapp}, we apply \cite[Theorem 3.59]{Juh} so that for every $\omega\Subset\Omega$, there exists a constant $c_{\omega}$ such that $u\geq c_{\omega}>0$ in $\Omega$. Thus $u>0$ in $\Omega$.
\end{proof}

\begin{Remark}\label{auxrmk}
We remark that, when $\mathcal{F}_{p,w}=\Delta_{p,w}$ or $\mathcal{S}_{p,w}$ as given by \eqref{exlap}, noting Lemma \ref{AI} and following the same proof above, Lemma \ref{auxresult} holds for any $1<p<\infty$.
\end{Remark}

\begin{Lemma}\label{exisapprox}
Let $2\leq p<\infty$. Then, 
\begin{enumerate}
\item[(A)] (Existence) for every $n\in\mathbb{N}$, the problem \eqref{mainapprox} admits a positive solution $u_{n}\in W_0^{1,p}(\Om,w)\cap L^\infty(\Om)$,
\item[(B)] (Uniqueness and monotonicity) $u_n$ is unique and $u_{n+1}\geq u_{n}$ for every $n$, 
\item[(C)] (Uniform positivity) for every $\omega\Subset\Omega$, there exists a constant $c_{\omega}$ (independent of $n$) such that $u_{n}\geq c_{\omega}>0$ in $\omega$. 
\item[(D)] (Uniform boundedness) $\|u_{n}\|\leq c$ for some positive constant $c$ independent of $n$.
\end{enumerate}
\end{Lemma}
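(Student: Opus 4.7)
For part (A), the plan is to view \eqref{mainapprox} as a fixed-point equation on $L^p(\Omega)$. Given $v\in L^p(\Omega)$, define
\begin{equation*}
\xi_v(x):=\frac{f_n(x)}{(v^+(x)+\frac{1}{n})^\delta}+\frac{g_n(x)}{(v^+(x)+\frac{1}{n})^\gamma},
\end{equation*}
which satisfies $0\le\xi_v\le n^{1+\delta}+n^{1+\gamma}$, so $\xi_v\in L^\infty(\Omega)\setminus\{0\}$. By Lemma \ref{auxresult}, there is a unique $T(v)\in W_0^{1,p}(\Omega,w)\cap L^\infty(\Omega)$ with $T(v)>0$ solving $-\mathcal{F}_{p,w}T(v)=\xi_v$. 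Testing this equation with $T(v)$ and using \eqref{lbd} yields $\|T(v)\|\le C(n)$, so by the compact embedding in Lemma \ref{embedding}, $T$ sends a sufficiently large ball of $L^p(\Omega)$ into a relatively compact subset of itself. Continuity of $T$ on $L^p(\Omega)$ is obtained from a.e.\ convergence together with dominated convergence (via the uniform bound on $\xi_v$) and the strong monotonicity of Lemma \ref{alg}. Schauder's fixed-point theorem then produces $u_n=T(u_n)\in W_0^{1,p}(\Omega,w)\cap L^\infty(\Omega)$ solving \eqref{mainapprox}, and Lemma \ref{auxresult} applied one last time with $\xi=\xi_{u_n}$ delivers $u_n>0$ in $\Omega$ together with the local lower bound $u_n\ge c_\omega$ on each $\omega\Subset\Omega$.

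\textbf{Uniqueness and monotonicity.} For part (B), I take two solutions $u,v$ of \eqref{mainapprox}, subtract their weak formulations, and test with $(u-v)^+\in W_0^{1,p}(\Omega,w)$, obtaining
\begin{equation*}
\int_\Omega\!\!\left\langle \mathcal{F}(\nabla u)^{p-1}\nabla_\eta\mathcal{F}(\nabla u)-\mathcal{F}(\nabla v)^{p-1}\nabla_\eta\mathcal{F}(\nabla v),\nabla(u-v)^+\right\rangle w\,dx=\int_\Omega(\Phi(u)-\Phi(v))(u-v)^+\,dx,
\end{equation*}
where the map $t\mapsto \Phi(t):=\frac{f_n}{(t^++1/n)^\delta}+\frac{g_n}{(t^++1/n)^\gamma}$ is nonincreasing. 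Lemma \ref{alg} forces the left-hand side to be nonnegative, whereas the right-hand side is nonpositive on $\{u>v\}$. Hence $(u-v)^+\equiv 0$ and, by symmetry, $u\equiv v$. The same strategy, testing with $(u_n-u_{n+1})^+$ and using $f_n\le f_{n+1}$, $g_n\le g_{n+1}$ and $\frac{1}{n}>\frac{1}{n+1}$ on the set $\{u_n>u_{n+1}\}$, produces an inequality with nonpositive right-hand side and yields $u_{n+1}\ge u_n$.

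\textbf{Uniform positivity and boundedness.} Part (C) follows immediately: by (B), $u_n\ge u_1$ for all $n\ge 1$, while Lemma \ref{auxresult} applied to the equation solved by $u_1$ provides a constant $c_\omega>0$ (independent of $n$) with $u_n\ge u_1\ge c_\omega$ on each $\omega\Subset\Omega$. For (D), I test \eqref{mainapprox} with $u_n$. Since $u_n\ge 0$, we have $u_n/(u_n+1/n)^\delta\le(u_n+1/n)^{1-\delta}\le(u_n+1)^{1-\delta}$, and similarly for the $\gamma$-term. Applying \eqref{lbd} on the left and H\"older's inequality with conjugate exponents $\bigl(\tfrac{p_s^*}{1-\delta}\bigr)'$ and $\tfrac{p_s^*}{1-\delta}$ on the right, together with the continuous inclusion $W_0^{1,p}(\Omega,w)\hookrightarrow L^{p_s^*}(\Omega)$ from Lemma \ref{embedding}, gives
\begin{equation*}
C_1^{\,p}\|u_n\|^p\le C\bigl(\|f\|_{L^{m_\delta}}+\|g\|_{L^{r_\gamma}}\bigr)\bigl(1+\|u_n\|\bigr)^{\max\{1-\delta,\,1-\gamma\}}.
\end{equation*}
Since $\max\{1-\delta,1-\gamma\}<1<p$, this yields a uniform bound $\|u_n\|\le c$. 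The cases $p_s=N$ and $p_s>N$ are handled analogously via the corresponding embeddings. The main obstacle is the uniqueness/monotonicity step, which requires the strict Finsler monotonicity of Lemma \ref{alg} (available for $p\ge 2$) to be dovetailed with the monotonicity of $\Phi$ in $u$ and with the ordering of the data $f_n,g_n,1/n$; once this is in place, the other three assertions are essentially automatic from Lemma \ref{auxresult} and a standard test-function estimate.
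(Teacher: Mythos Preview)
Your proof is correct and follows essentially the same approach as the paper: Schauder's fixed point on $L^p(\Omega)$ via Lemma~\ref{auxresult} for existence, testing with $(u_n-u_{n+1})^+$ and the Finsler monotonicity of Lemma~\ref{alg} for uniqueness/monotonicity, the chain $u_n\ge u_1\ge c_\omega$ for uniform positivity, and testing with $u_n$ plus H\"older and the embedding for the uniform $W_0^{1,p}(\Omega,w)$ bound. The only cosmetic difference is that in part (D) the paper uses the sharper bound $u_n/(u_n+1/n)^\delta\le u_n^{1-\delta}$ directly, whereas you pass through $(u_n+1)^{1-\delta}$; both yield the same conclusion since $\max\{1-\delta,1-\gamma\}<p$.
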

\begin{proof}
\begin{enumerate} 
\item[(A)] Let $n\in\mathbb{N}$ be fixed. By Lemma \ref{auxresult}, for every $\zeta\in L^p(\Omega)$, there exists a unique positive solution $u\in W_0^{1,p}(\Om,w)\cap L^\infty(\Om)$ such that
\begin{equation}\label{approxfixed}
-\mathcal{F}_{p,w}u=\frac{f_{n}}{(\zeta^{+}+\frac{1}{n})^\delta}+\frac{g_{n}}{(\zeta^{+}+\frac{1}{n})^\gamma}\text{ in }\Omega.
\end{equation} 
 We define the operator $S:L^p(\Omega)\to L^p(\Omega)$ by $S(\zeta)=u$ where $u$ solves \eqref{approxfixed}. Choosing $u$ as a test function in \eqref{approxfixed} and using the property \eqref{lbd}, for some positive constant $c=c(n)$, we arrive at
\begin{align*}
\|u\|^p\leq\int_{\Omega}(n^{\delta+1}+n^{\gamma+1})u\,dx\leq c\Big(\int_{\Omega}|u|^p\,dx\Big)^\frac{1}{p}.
\end{align*}
Thus from Lemma \ref{embedding}, we have
$$
\|u\|\leq c.
$$
Now applying Schauder's fixed point theorem as in \cite[Lemma $3.2$]{BGM} the existence of a fixed point $u_{n}$ of $S$ follows. As a consequence, $u_n$ solves the problem \eqref{mainapprox}. Moreover, by Lemma \ref{auxresult}, we have $u_n>0$ in $\Om$ and for every $\omega\Subset\Om$, there exists a constant $c_{\omega}$ such that $u_1\geq c_{\omega}>0$ in $\omega$.
\item[(B)] Choosing $\phi = (u_{n}-u_{n+1})^+$ as a test function in \eqref{mainapprox} we have
\begin{equation*}
\begin{split}
J&=\langle-\mathcal{F}_{p,w}(u_{n})+\mathcal{F}_{p,w}(u_{n+1}),(u_{n}-u_{n+1})^{+}\rangle\\
&=\int_{\Omega}\Big\{\frac{f_{n}}{\big(u_{n}+\frac{1}{n}\big)^\delta}-\frac{f_{n+1}}{\big(u_{n+1}+\frac{1}{n+1}\big)^\delta}+\frac{g_{n}}{\big(u_{n}+\frac{1}{n}\big)^\gamma}-\frac{g_{n+1}}{\big(u_{n+1}+\frac{1}{n+1}\big)^\gamma}\Big\}(u_{n}-u_{n+1})^+\,dx\\
&:=J_1+J_2. 
\end{split}
\end{equation*}
Using the inequalities $f_{n}(x) \leq f_{n+1}(x)$, we obtain
\begin{align*}
J_1&=\int_{\Omega}\Big\{\frac{f_{n}}{\big(u_{n}+\frac{1}{n}\big)^\delta}-\frac{f_{n+1}}{\big(u_{n+1}+\frac{1}{n+1}\big)^\delta}\Big\}(u_{n}-u_{n+1})^+\,dx\\
&\leq\int_{\Omega}f_{n+1}\Big\{\frac{1}{\big(u_{n}+\frac{1}{n}\big)^\delta}-\frac{1}{\big(u_{n+1}+\frac{1}{n+1}\big)^\delta}\Big\}(u_{n}-u_{n+1})^+\,dx\leq 0.
\end{align*}
Similarly, using $g_{n}(x)\leq g_{n+1}(x)$, we get
$$
J_2=\int_{\Omega}\Big\{\frac{g_{n}}{\big(u_{n}+\frac{1}{n}\big)^\gamma}-\frac{g_{n+1}}{\big(u_{n+1}+\frac{1}{n+1}\big)^\gamma}\Big\}(u_{n}-u_{n+1})^+\,dx\leq 0.
$$
Hence, we have $J\leq 0$. Noting this fact and $p\geq 2$, using Lemma \ref{alg}, we have
\begin{align}\label{algapp}
\int_{\Om}\big|\mathcal{F}\big(\nabla(u_n-u_{n+1})^+\big)\big|^p w\,dx=0.
\end{align}
Therefore, $u_{n+1}\geq u_{n}$ in $\Omega$. Uniqueness follows similarly.
\item[(C)] From the above estimate in $(A)$, we know that $u_{1}\geq c_{\omega}>0$ for every $\omega\Subset\Omega$. Hence using the monotonicity, for every $\omega\Subset\Omega$, we get $u_{n}\geq c_{\omega}>0$ in $\omega$, for some positive constant $c_{\omega}$ (independent of $n$).
\item[(D)] We only consider the case $1\leq p_s<N$, since the other cases are analogous. To this end, we choose $u_{n}$ as a test function in \eqref{mainapprox} and using \eqref{lbd}, we get
\begin{equation}\label{unif}
\begin{split}
\|u_{n}\|^p&\leq\int_{\Omega}f u_{n}^{1-\delta}\,dx+\int_{\Omega}g u_{n}^{1-\gamma}\,dx\\
&\leq \|f\|_{L^{m_{\delta}}(\Omega)}\Big(\int_{\Omega}u_{n}^{(1-\delta)m_{\delta}'}\,dx\Big)^\frac{1}{m_{\delta}'}+\|g\|_{L^{r_{\gamma}}(\Omega)}\Big(\int_{\Omega}u_{n}^{(1-\gamma)r_{\gamma}'}\,dx\Big)^\frac{1}{r_{\gamma}'}\\
&=\|f\|_{L^{m_{\delta}}(\Omega)}\Big(\int_{\Omega}u_{n}^{p_s^{*}}\,dx\Big)^\frac{1-\delta}{p_s^{*}}+\|g\|_{L^{r_{\gamma}}(\Omega)}\Big(\int_{\Omega}u_{n}^{p_s^{*}}\,dx\Big)^\frac{1-\gamma}{p_s^{*}}\\
&\leq\|f\|_{L^{m_{\delta}}(\Omega)}\|u_n\|^{1-\delta}+\|g\|_{L^{r_{\gamma}}(\Omega)}\|u_n\|^{1-\gamma},
\end{split}
\end{equation}
where in the final step above, we have employed Lemma \ref{embedding}. Therefore, we have $\|u_{n}\|\leq c$, for some positive constant $c$ (independent of $n$).
\end{enumerate}
\end{proof}

\begin{Remark}\label{anisormk}
We remark that, when $\mathcal{F}_{p,w}=\Delta_{p,w}$ or $\mathcal{S}_{p,w}$ as given by \eqref{exlap}, noting Lemma \ref{AI} along with Remark \ref{auxrmk} and following the same proof above, Lemma \ref{exisapprox} holds for any $1<p<\infty$.
\end{Remark}

\begin{Remark}\label{limit}
As a consequence of Lemma \ref{exisapprox} and Remark \ref{anisormk}, let $u_{\delta,\gamma}\in W_0^{1,p}(\Om,w)$ be the weak and pointwise limit of $u_{n}$. Then using the monotonicity property from $(B)$ in Lemma \ref{exisapprox}, it follows that $u_{n}\leq u_{\delta,\gamma}$ for all $n\in\mathbb{N}$. Below, we observe that $u_{\delta,\gamma}$ is our required solution.
\end{Remark}

\begin{Lemma}\label{essbdd}
Let $2\leq p<\infty$ and suppose that $(f,g)\neq (0,0)$ is nonnegative such that

\begin{enumerate}
\item[(E)] $f,g\in L^q(\Om)$ for $q>\frac{p_s^{*}}{p_s^{*}-p}$, when $1\leq p_s<N$,
\item[(F)] $f,g\in L^q(\Om)$ for $q>\frac{r}{r-p}$, when $p_s=N$ and $r>p$,
\item[(G)] $f,g\in L^1(\Om)$ for $p_s>N$. 
\end{enumerate} 
Then $\|u_n\|_{L^\infty(\Omega)}\leq C$, for some positive constant $C$ independent of $n$.
\end{Lemma}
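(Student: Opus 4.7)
The plan is a De Giorgi / Stampacchia level-set iteration applied uniformly in $n$ to the approximate problem \eqref{mainapprox}. For $k \geq 1$, set $A_k := \{x \in \Omega : u_n(x) > k\}$ and use $G_k(u_n) := (u_n - k)^+ \in W_0^{1,p}(\Omega, w)$ as test function. On $A_k$ we have $u_n^+ + 1/n > 1$, so since $0 < \delta, \gamma < 1$, together with $f_n \leq f$ and $g_n \leq g$, the singular right-hand side is dominated by $f + g$ on $A_k$. Combined with the coercivity bound \eqref{lbd} of Corollary \ref{regapp} and $\nabla G_k(u_n) = \chi_{A_k} \nabla u_n$, this gives
\begin{equation*}
C_1 \int_{A_k} |\nabla u_n|^p\, w\, dx \;\leq\; \int_{\Omega} w\, \mathcal{F}(\nabla u_n)^{p-1} \nabla_\eta \mathcal{F}(\nabla u_n) \cdot \nabla G_k(u_n) \, dx \;\leq\; \int_{A_k} (f + g) G_k(u_n) \, dx.
\end{equation*}

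In the subcritical regime $1 \leq p_s < N$, I would apply Hölder's inequality to the right-hand side with the triple of exponents $(q, p_s^*, r_0)$, where $\tfrac{1}{r_0} := 1 - \tfrac{1}{q} - \tfrac{1}{p_s^*} > 0$ (the positivity is ensured already by $q > p_s^*/(p_s^* - 1)$, which follows from the hypothesis), and invoke Lemma \ref{embedding} to bound $\|G_k(u_n)\|_{L^{p_s^*}(\Omega)}$ by $\|G_k(u_n)\|$. Since $\|G_k(u_n)\|^p = \int_{A_k} |\nabla u_n|^p w\, dx$, dividing out one power of $\|G_k(u_n)\|$ produces the key estimate
\begin{equation*}
\|G_k(u_n)\|^{p-1} \;\leq\; C \bigl( \|f\|_{L^q(\Omega)} + \|g\|_{L^q(\Omega)} \bigr) \, |A_k|^{1/r_0},
\end{equation*}
with $C$ depending only on the embedding constants of Lemma \ref{embedding} and on $C_1$, hence independent of $n$ and $k$. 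Coupling this with the elementary tail inequality $(h - k) |A_h|^{1/p_s^*} \leq \|G_k(u_n)\|_{L^{p_s^*}(\Omega)}$ valid for $h > k \geq 1$ yields
\begin{equation*}
|A_h| \;\leq\; \frac{\widetilde{C}}{(h - k)^{p_s^*}} \, |A_k|^\beta, \qquad \beta \;=\; \frac{p_s^*}{p - 1}\Bigl( 1 - \frac{1}{q} - \frac{1}{p_s^*} \Bigr).
\end{equation*}

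A direct algebraic check shows $\beta > 1 \iff q > p_s^* / (p_s^* - p)$, which is exactly hypothesis (E); Stampacchia's classical iteration lemma then furnishes a finite $k^\star = k^\star(\widetilde{C}, \beta, |\Omega|)$ with $|A_{k^\star}| = 0$, that is, $u_n \leq k^\star$ almost everywhere in $\Omega$. Since $\widetilde{C}$, $\beta$ and the seed bound $|A_1| \leq |\Omega|$ are all $n$-independent, so is $k^\star$. The case $p_s = N$ is completely analogous, replacing $p_s^*$ by any fixed $r > p$ in the embedding from Lemma \ref{embedding} and in the Hölder exponents; the condition $\beta > 1$ transforms into $q > r/(r - p)$, matching hypothesis (F). The case $p_s > N$ is immediate from Lemma \ref{embedding}, which gives the continuous embedding $W_0^{1,p}(\Omega, w) \hookrightarrow C(\overline{\Omega})$, so $\|u_n\|_{L^\infty(\Omega)} \lesssim \|u_n\|$, and Lemma \ref{exisapprox}(D) already supplies the uniform $W_0^{1,p}$ bound.

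The main obstacle is bookkeeping: one must select the Hölder / embedding exponents so that the resulting power $\beta$ of $|A_k|$ in the Stampacchia iteration is strictly larger than $1$, and this threshold is exactly what the integrability assumption on $(f,g)$ encodes. Once the exponents are pinned down, the $n$-independence is automatic because every constant in the chain depends only on $\|f\|_{L^q}$, $\|g\|_{L^q}$, $|\Omega|$, $C_1$, and the fixed embedding constants.
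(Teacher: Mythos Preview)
Your proposal is correct and follows essentially the same Stampacchia level-set iteration as the paper: the same test function $(u_n-k)^+$ for $k\geq 1$, the same observation that on $\{u_n>k\}$ the singular denominators are bounded below by $1$, and the same appeal to \cite[Lemma~B.1]{Stam} once the iteration inequality $|A_h|\leq \widetilde{C}(h-k)^{-p_s^*}|A_k|^\beta$ with $\beta>1$ is established. The only difference is organizational: the paper first applies H\"older with the pair $(p_s^{*'},p_s^*)$, then Young's inequality to absorb $\|(u_n-k)^+\|$, and finally H\"older again with exponent $q/p_s^{*'}$, whereas you apply a single three-term H\"older with $(q,p_s^*,r_0)$ and divide out one power of the norm directly; a short computation shows the resulting exponent $\beta=\frac{p_s^*}{p-1}\bigl(1-\tfrac{1}{q}-\tfrac{1}{p_s^*}\bigr)$ coincides with the paper's $\alpha$.
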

\begin{proof}
We only prove the result under the hypothesis in $(E)$, since the other cases are analogous. To this end, let $1\leq p_s<N$ and $f,g\in L^q(\Om)$ for $q>\frac{p_s^{*}}{p_s^{*}-p}$. Assume $k\geq 1$ and define $A(k)=\{x\in\Om:u_n(x)\geq k\}$. Choosing $\phi_k(x)=(u_n-k)^{+}$ as a test function in \eqref{mainapprox}, first using H\"older's inequality with the exponents $p_s^{*'}, p_s^*$ and then, by Young's inequality with exponents $p$ and $p'$, we obtain
\begin{equation}\label{unibdd}
\begin{split}
\|\phi_k\|^p&=\int_{\Om}\mathcal{F}(\nabla\phi_k)^p w\,dx\\
&=\int_{\Om}\frac{f_n}{\big(u_n+\frac{1}{n}\big)^\delta}\phi_k\,dx+\int_{\Om}\frac{g_n}{\big(u_n+\frac{1}{n}\big)^\delta}\phi_k\,dx\\
&\leq\int_{A(k)}f(x)\phi_k\,dx+\int_{A(k)}g(x)\phi_k\,dx\\
&\leq\left(\int_{A(k)}f^{p_s^{*'}}\,dx\right)^\frac{1}{p_s^{*'}}\left(\int_{\Om}\phi_k^{p_{s}^*}\,dx\right)^\frac{1}{p_{s}^*}+\left(\int_{A(k)}g^{p_s^{*'}}\,dx\right)^\frac{1}{p_s^{*'}}\left(\int_{\Om}\phi_k^{p_{s}^*}\,dx\right)^\frac{1}{p_{s}^*}\\
&\leq C\left(\int_{A(k)}f^{p_{s}^*{'}}\,dx\right)^\frac{1}{p_s^{*'}}\|\phi_k\|+C\left(\int_{A(k)}g^{p_{s}^*{'}}\,dx\right)^\frac{1}{p_s^{*'}}\|\phi_k\|\\
&\leq \epsilon\|\phi_k\|^p+C(\epsilon)\left(\int_{A(k)}f^{p_s^{*'}}\,dx\right)^\frac{p'}{p_s^{*'}}+C(\epsilon)\left(\int_{A(k)}g^{p_s^{*'}}\,dx\right)^\frac{p'}{p_s^{*'}}.
\end{split}
\end{equation}
Here, $C$ is the Sobolev constant from Lemma \ref{embedding} and  $C(\epsilon)>0$ is some constant depending on $\epsilon\in(0,1)$ but are independent of $n$. Note that $q>\frac{p_s^*}{p_s^*-p}$ gives $q>p_s^{*'}$. Therefore, fixing $\epsilon\in(0,1)$ and again using H\"older's inequality with exponents $\frac{q}{p_s^{*'}}$ and $\big(\frac{q}{p_s^{*'}}\big)'$, for some constant $C>0$ which is independent of $n$, we obtain
\begin{equation}\label{supbd1}
\begin{split}
\|\phi_k\|^p&\leq C\left(\int_{A(k)}f^{p_s^{*'}}\,dx\right)^\frac{p'}{p_s^{*'}}+C\left(\int_{A(k)}g^{p_s^{*'}}\,dx\right)^\frac{p'}{p_s^{*'}}\\
&\leq C\Bigg\{\left(\int_{\Omega}f^q\,dx\right)^\frac{p'}{q}+\left(\int_{\Omega}g^q\,dx\right)^\frac{p'}{q}\Bigg\}|A(k)|^{\frac{p'}{p_s^{*'}}\frac{1}{\big(\frac{q}{p_s^{*'}}\big)'}}\\
&\leq C|A(k)|^{\frac{p'}{p_s^{*'}}\frac{1}{\big(\frac{q}{p_s^{*'}}\big)'}}.
\end{split}
\end{equation}
Let $h>0$ be such that $1\leq k<h$. Then, $A(h)\subset A(k)$ and for any $x\in A(h)$, we have $u_n(x)\geq h$. So, $u_n(x)-k\geq h-k$ in $A(h)$. Combining these facts along with \eqref{supbd1} and again using Lemma \ref{embedding} for some constant $C>0$ (independent of $n$), we arrive at the estimate below
\begin{align*}
(h-k)^p|A(h)|^\frac{p}{p_s^*}&\leq\left(\int_{A(h)}(u_n-k)^{p_s^*}\,dx\right)^\frac{p}{p_s^{*}}\\
&\leq\left(\int_{A(k)}(u_n-k)^{p_s^*}\,dx\right)^\frac{p}{p_s^{*}}\leq C\|\phi_k\|^p\leq C|A(k)|^{\frac{p'}{p_s^{*'}}\frac{1}{\big(\frac{q}{p_s^{*'}}\big)'}}.
\end{align*}
Thus, for some constant $C>0$ (independent of $n$), we have
$$
|A(h)|\leq \frac{C}{(h-k)^{p_s^*}}|A(k)|^{\alpha},
$$
where
$$
\alpha={\frac{p_s^{*}p'}{pp_s^{*'}}\frac{1}{\big(\frac{q}{p_s^{*'}}\big)'}}.
$$
Due to the assumption, $q>\frac{p_s^*}{p_s^*-p}$, we have $\alpha>1$. Hence, by \cite[Lemma B.1]{Stam}, we have
$$
||u_n||_{L^\infty(\Om)}\leq C,
$$
for some positive constant $C>0$ independent of $n$.
\end{proof}
We end this section by establishing the following properties of $u_n$ that are very important to prove the weighted anisotropic Sobolev inequality.
\begin{Lemma}\label{lemma1}
The solutions $u_n$ of the problem \eqref{mainapprox} found in Lemma \ref{exisapprox} has the following properties:
\begin{enumerate}
\item[(a)] Let $n\in\mathbb{N}$ and $\phi\in W_0^{1,p}(\Om,w)$. Then we have
\begin{equation}\label{prop1}
\|u_{n}\|^{p}\leq\|\phi\|^{p}+p\int_{\Omega}\frac{(u_{n}-\phi)}{(u_{n}+\frac{1}{n})^\delta}f_{n}\,dx+p\int_{\Omega}\frac{(u_{n}-\phi)}{(u_{n}+\frac{1}{n})^\gamma}g_{n}\,dx.
\end{equation}
\item[(b)] (Monotonicity in norm) For every $n\in\mathbb{N}$, we have $\|u_{n}\|\leq\|u_{n+1}\|$.
\item[(c)] (Strong convergence) Upto a subsequence $\{u_n\}$ converges strongly to $u_{\delta,\gamma}$ in $W_0^{1,p}(\Om,w)$.
\item[(d)] Further, $u_{\delta,\gamma}$ is a minimizer of the energy functional $I_{\delta,\gamma}:W_0^{1,p}(\Om,w)\to\mathbb{R}$ be defined by
\begin{align}\label{eng}
I_{\delta,\gamma}(v):=\frac{1}{p}\|v\|^p-\frac{1}{1-\delta}\int_{\Omega}(v^{+})^{1-\delta}f\,dx-\frac{1}{1-\delta}\int_{\Omega}(v^{+})^{1-\gamma}g\,dx.
\end{align}
\end{enumerate}
\end{Lemma}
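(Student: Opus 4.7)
My strategy is to prove (a) directly from the weak formulation of \eqref{mainapprox}, and then to deduce (b), (c), and (d) in turn from (a). The technical difficulty is concentrated in (d), where I must pass to the limit inside the singular integrals.

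For (a), I would plug $u_n-\phi\in W_0^{1,p}(\Omega,w)$ into the weak formulation of \eqref{mainapprox}. Since $u_n\geq 0$ gives $(u_n+1/n)^\delta\geq (1/n)^\delta$, the right-hand side of \eqref{mainapprox} is essentially bounded, so by density the weak formulation is valid against any $W_0^{1,p}(\Omega,w)$ test function. The convexity of $x\mapsto\mathcal{F}(x)^p/p$, which is a consequence of Lemma~\ref{Happ}(D), then supplies the tangent-hyperplane inequality
$$
\mathcal{F}(\nabla\phi)^p-\mathcal{F}(\nabla u_n)^p\ \geq\ p\,\mathcal{F}(\nabla u_n)^{p-1}\nabla_\eta\mathcal{F}(\nabla u_n)\cdot\nabla(\phi-u_n).
$$
Multiplying by $w$, integrating, and replacing the right-hand side via the weak formulation (after using Lemma~\ref{Happ}(A) to identify $\nabla u_n\cdot\nabla_\eta\mathcal{F}(\nabla u_n)=\mathcal{F}(\nabla u_n)$) produces \eqref{prop1} upon rearrangement.

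Parts (b) and (c) then follow cleanly from (a) by suitable choices of $\phi$. For (b), set $\phi=u_{n+1}$ in the inequality (a) for $u_n$; the monotonicity $u_n\leq u_{n+1}$ from Lemma~\ref{exisapprox}(B) together with $f_n,g_n\geq 0$ makes both integrals on the right-hand side nonpositive, forcing $\|u_n\|^p\leq \|u_{n+1}\|^p$. For (c), Lemma~\ref{exisapprox}(D) gives that $\{u_n\}$ is bounded in $W_0^{1,p}(\Omega,w)$, hence (passing to a subsequence) $u_n\rightharpoonup u_{\delta,\gamma}$. Taking $\phi=u_{\delta,\gamma}$ in (a) and using $u_n\leq u_{\delta,\gamma}$ from Remark~\ref{limit} again annihilates both integral terms, producing $\|u_n\|^p\leq\|u_{\delta,\gamma}\|^p$. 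Combined with weak lower semicontinuity of the norm, this yields $\|u_n\|\to\|u_{\delta,\gamma}\|$; since $W_0^{1,p}(\Omega,w)$ is uniformly convex (as recorded after \eqref{equinorm}), convergence of norms plus weak convergence upgrades to strong convergence.

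For (d), I first reduce to nonnegative competitors: because $\mathcal{F}(\nabla v^+)\leq\mathcal{F}(\nabla v)$ pointwise, $\|v^+\|\leq\|v\|$, and the nonlinear terms in $I_{\delta,\gamma}$ depend only on $v^+$, so $I_{\delta,\gamma}(v^+)\leq I_{\delta,\gamma}(v)$ and it suffices to consider $v\geq 0$. For such $v$, I apply (a) with $\phi=v$, divide by $p$, and invoke the elementary concavity inequality
$$
\frac{a-b}{a^{1-\alpha}}\ \leq\ \frac{a^\alpha-b^\alpha}{\alpha},\qquad a,b>0,\ \alpha\in(0,1),
$$
with $a=u_n+\tfrac{1}{n}$, $b=v+\tfrac{1}{n}$, and $\alpha\in\{1-\delta,\,1-\gamma\}$, to upper bound the two singular integrals by
$$
\frac{1}{1-\delta}\int_\Omega\!\big((u_n{+}\tfrac{1}{n})^{1-\delta}-(v{+}\tfrac{1}{n})^{1-\delta}\big)f_n\,dx\ +\ \frac{1}{1-\gamma}\int_\Omega\!\big((u_n{+}\tfrac{1}{n})^{1-\gamma}-(v{+}\tfrac{1}{n})^{1-\gamma}\big)g_n\,dx.
$$
The hardest step is the limit $n\to\infty$: part (c) yields $\|u_n\|^p\to\|u_{\delta,\gamma}\|^p$, and the four singular integrals pass by Lebesgue dominated convergence, since $u_n+1/n\leq u_{\delta,\gamma}+1$ and $f_n\leq f$, $g_n\leq g$ provide the majorants $(u_{\delta,\gamma}+1)^{1-\delta}f$ and $(u_{\delta,\gamma}+1)^{1-\gamma}g$. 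These lie in $L^1(\Omega)$ by H\"older's inequality against the $L^{p_s^\ast}$ estimate for $u_{\delta,\gamma}$ from Lemma~\ref{embedding} (the exponents $m_\delta$ and $r_\gamma$ in Theorem~\ref{thm1} are designed precisely to make $u_{\delta,\gamma}^{1-\delta}f$ and $u_{\delta,\gamma}^{1-\gamma}g$ integrable). Rearranging the limiting inequality then yields $I_{\delta,\gamma}(u_{\delta,\gamma})\leq I_{\delta,\gamma}(v)$, which is (d).
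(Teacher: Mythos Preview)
Your proof is correct. For parts (a)--(c) it matches the paper in substance: the paper derives \eqref{prop1} from the fact that $u_n$ minimises the functional
\[
J(\phi)=\frac{1}{p}\|\phi\|^{p}-\int_{\Omega}\frac{f_{n}}{(u_n+\tfrac{1}{n})^\delta}\phi\,dx-\int_{\Omega}\frac{g_{n}}{(u_n+\tfrac{1}{n})^\gamma}\phi\,dx,
\]
while you obtain the same inequality from the weak formulation together with the convexity (tangent-plane) inequality for $\mathcal{F}^p$; these are the variational and Euler--Lagrange sides of the same coin, and (b), (c) are argued identically.

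The genuine difference lies in (d). The paper introduces auxiliary $C^1$ functionals $I_n$ built from primitives $G_n(t)=\frac{1}{1-\delta}(t^{+}+\tfrac{1}{n})^{1-\delta}-n^{\delta}t^{-}$ and $H_n$ analogously, shows $I_n$ is coercive and weakly lower semicontinuous so has a minimiser $v_n\geq 0$, observes $v_n$ solves \eqref{mainapprox}, and then invokes the uniqueness from Lemma~\ref{exisapprox}(B) to force $v_n=u_n$; only then is $I_n(u_n)\leq I_n(v^{+})$ passed to the limit. Your route is shorter: applying (a) with $\phi=v\geq 0$ and the elementary concavity bound $(a-b)/a^{1-\alpha}\leq(a^{\alpha}-b^{\alpha})/\alpha$ yields \emph{exactly} the inequality $I_n(u_n)\leq I_n(v)$ (for nonnegative $v$) without any appeal to uniqueness or to an independent existence argument for the minimiser of $I_n$. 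Both approaches require the same dominated-convergence step at the end, so what your argument buys is economy---it avoids the detour through $I_n$ entirely---while the paper's approach has the incidental by-product of identifying $u_n$ as the minimiser of a regularised energy.
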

\begin{proof}
\begin{enumerate}
\item[(a)] Let us fix $\xi\in W_0^{1,p}(\Om,w)$. Then by Lemma \ref{auxresult}, there exists a unique solution $v\in W_0^{1,p}(\Om,w)$ of the problem
\begin{equation}\label{auxeqn}
-\mathcal{F}_{p,w}v=\frac{f_{n}(x)}{(\xi^{+}+\frac{1}{n})^\delta}+\frac{g_{n}(x)}{(\xi^{+}+\frac{1}{n})^\gamma},\,v>0\text{ in }\Omega,\,v=0\text{ on }\partial\Omega.
\end{equation}

Also $v$ is a minimizer of the functional $J:W_0^{1,p}(\Om,w)\to\mathbb{R}$ given by
$$
J(\phi):=\frac{1}{p}\|\phi\|^{p}-\int_{\Omega}\frac{f_{n}}{(\xi^{+}+\frac{1}{n})^\delta}\phi\,dx-\int_{\Omega}\frac{g_{n}}{(\xi^{+}+\frac{1}{n})^\gamma}\phi\,dx.
$$

Therefore, for every $\phi\in W_0^{1,p}(\Om,w)$, we have $J(v)\leq J(\phi)$ which gives
\begin{equation}\label{mineqn}
\begin{split}
&\frac{1}{p}\|v\|^{p}-\int_{\Omega}\frac{f_{n}}{(\xi^{+}+\frac{1}{n})^\delta}v\,dx-\int_{\Omega}\frac{g_{n}}{(\xi^{+}+\frac{1}{n})^\gamma}v\,dx\\
&\quad\leq \frac{1}{p}\|\phi\|^{p}-\int_{\Omega}\frac{f_{n}}{(\xi^{+}+\frac{1}{n})^\delta}\phi\,dx-\int_{\Omega}\frac{g_{n}}{(\xi^{+}+\frac{1}{n})^\gamma}\phi\,dx.
\end{split}
\end{equation}
Setting $v=\xi=u_{n}$ in the inequality \eqref{mineqn}, the estimate \eqref{prop1} follows. 

\item[(b)] By Lemma \ref{exisapprox}, we have $u_{n}\leq u_{n+1}$. Then choosing $\phi=u_{n+1}$ in \eqref{prop1}, we obtain $\|u_{n}\|\leq \|u_{n+1}\|$.

\item[(c)] We choose $\phi=u_{\delta,\gamma}$ in \eqref{prop1} and then using the property $u_{n}\leq u_{\delta,\gamma}$ from Remark \ref{limit}, we have $\|u_{n}\|\leq \|u_{\delta,\gamma}\|$. Hence using the norm monotonicity property $\|u_n\|\leq \|u_{n+1}\|$ from $(b)$, we have
\begin{equation}\label{lim1}
\lim_{n\to\infty}\|u_{n}\|\leq \|u_{\delta,\gamma}\|.
\end{equation}
Moreover since $u_{n}\rightharpoonup u_{\delta,\gamma}$ weakly in $W_0^{1,p}(\Om,w)$, we get
\begin{equation}\label{lim2}
\|u_{\delta,\gamma}\|\leq \lim_{n\to\infty}\|u_{n}\|.
\end{equation}
Thus from \eqref{lim1} and \eqref{lim2}, the result follows.

\item[(d)] It is enough to show that
\begin{equation}\label{limclm}
I_{\delta,\gamma}(u_{\delta,\gamma})\leq I_{\delta,\gamma}(v),\quad\forall \,v\in W_0^{1,p}(\Om,w).
\end{equation}
Let us define the auxiliary functional $I_{n}:W_0^{1,p}(\Om,w)\to\mathbb{R}$ by
$$
I_{n}(v):=\frac{1}{p}\|v\|^{p}-\int_{\Omega}G_n(v)f_{n}\,dx-\int_{\Omega}H_n(v)g_{n}\,dx,
$$
where
$$
G_n(t):=\frac{1}{1-\delta}\Big(t^{+}+\frac{1}{n}\Big)^{1-\delta}-\Big(\frac{1}{n}\Big)^{-\delta}t^{-},
$$
and
$$
H_n(t):=\frac{1}{1-\gamma}\Big(t^{+}+\frac{1}{n}\Big)^{1-\gamma}-\Big(\frac{1}{n}\Big)^{-\gamma}t^{-}.
$$
Then we observe that $I_{n}$ is $C^1$, bounded below and coercive. As a consequence, $I_{n}$ has a minimizer at some $v_{n}\in W_0^{1,p}(\Om,w)$. Therefore, it follows that $I_n(v_n)\leq I_n(v_n^{+})$, which gives $v_{n}\geq 0$ in $\Omega$. Noting that $<I_{n}{'}(v_{n}),\phi>=0$ for all $\phi\in W_0^{1,p}(\Om,w)$, we conclude that $v_{n}$ solves \eqref{mainapprox}. By the uniqueness property from Lemma \ref{exisapprox}, we have $u_{n}=v_{n}$. Hence $u_{n}$ is a minimizer of $I_{n}$. Therefore, we obtain
\begin{equation}\label{Inlim}
I_n(u_n)\leq I_n(v+),\quad\forall\,v\in W_0^{1,p}(\Om,w).
\end{equation}
Now we pass the limit as $n\to\infty$ in \eqref{Inlim} to prove our claim \eqref{limclm}.
Firstly, by Remark \ref{limit} using the fact $u_{n}\leq u_{\delta,\gamma}$ along with the Lebesgue dominated convergence theorem, we have
\begin{equation}\label{Inlim1}
\begin{split}
\lim_{n\to\infty}\int_{\Omega}G_n(u_{n})f_{n}\,dx&=\frac{1}{1-\delta}\int_{\Omega}(u_{\delta,\gamma})^{1-\delta}f\,dx,\\
\lim_{n\to\infty}\int_{\Omega}H_n(u_{n})g_{n}\,dx&=\frac{1}{1-\gamma}\int_{\Omega}(u_{\delta,\gamma})^{1-\gamma}g\,dx.
\end{split}
\end{equation}
Furthermore, by the strong convergence property $(c)$ above, we have 
\begin{equation}\label{Inlim2}
\lim_{n\to\infty}\|u_{n}\|= \|u_{\delta,\gamma}\|.
\end{equation}
Hence, using \eqref{Inlim1} and \eqref{Inlim2}, we have
\begin{equation}\label{Inlim3}
\lim_{n\to\infty}I_{n}(u_{n})=I_{\delta,\gamma}(u_{\delta,\gamma}).
\end{equation}
Moreover, for any $v\in W_0^{1,p}(\Om,w)$, we have
\begin{equation}\label{Jnlim1}
\begin{split}
\lim_{n\to\infty}\int_{\Omega}G_n(v^{+})f_{n}\,dx&=\frac{1}{1-\delta}\int_{\Omega}(v^{+})^{1-\delta}f\,dx,\\
\lim_{n\to\infty}\int_{\Omega}H_n(v^{+})g_{n}\,dx&=\frac{1}{1-\gamma}\int_{\Omega}(v^{+})^{1-\delta}g\,dx.
\end{split}
\end{equation}
 Finally, letting $n\to\infty$ in \eqref{Inlim} and then employing the estimates \eqref{Inlim3}, \eqref{Jnlim1} and that $\|v^{+}\|\leq \|v\|$, the claim \eqref{limclm} follows.
\end{enumerate}
\end{proof}

\section{Auxiliary results for the problem $\mathcal{(R)}$}
This section deals to establish some results that are very crucial to prove Theorem \ref{ethm}-\ref{ethmnew}. In order to proceed, let $h\in L^t(\Om)\setminus\{0\}$ be nonnegative as given by Theorem \ref{ethm} and we consider the following approximated problem for every $n\in\mathbb{N}$, given by
\begin{equation}\label{eapprox}
-\mathcal{F}_{p,w}u=h_n(x)e^\frac{1}{\big(u^++\frac{1}{n}\big)}\text{ in }\Om,\,u=0\text{ on }\partial\Om,
\end{equation}
where $h_n(x)=\min\{h(x),n\}$.

Then, we have the following results for the problem \eqref{eapprox}.

\begin{Lemma}\label{eexisapprox}
Let $2\leq p<\infty$. Then, 
\begin{enumerate}
\item[(A)] for every $n\in\mathbb{N}$, the problem \eqref{eapprox} admits a positive solution $v_{n}\in W_0^{1,p}(\Om,w)$,
\item[(B)] $v_n$ is unique and $v_{n+1}\geq v_{n}$ for every $n$,
\item[(C)] for every $\omega\Subset\Omega$, there exists a constant $c_{\omega}$ (independent of $n$) satisfying $v_{n}\geq c_{\omega}>0$ in $\omega$,
\item[(D)] $\|v_{n}\|_{L^\infty(\Om)}\leq c$ for some positive constant $c$ independent of $n$.
\end{enumerate}
\end{Lemma}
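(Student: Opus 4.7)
The plan is to follow the scheme used for Lemma \ref{exisapprox}, replacing the mixed singular nonlinearity by the exponential one and exploiting that the approximated right-hand side $h_n(x)e^{1/(u^++1/n)}$ is bounded in $L^\infty(\Omega)$ for each fixed $n$ (since $h_n\leq n$ and the exponential is at most $e^n$).

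For part (A), I would fix $n\in\mathbb{N}$ and define a map $T:L^p(\Omega)\to L^p(\Omega)$ by $T(\zeta)=u$, where $u$ is the unique solution produced by Lemma \ref{auxresult} of
$$-\mathcal{F}_{p,w}u=h_n(x)e^{1/(\zeta^++1/n)}\quad\text{in }\Omega.$$
Since $\|h_n e^{1/(\zeta^++1/n)}\|_{L^\infty(\Omega)}\leq n\,e^n$, Lemma \ref{auxresult} gives $u\in W_0^{1,p}(\Omega,w)\cap L^\infty(\Omega)$ with $u>0$ in $\Omega$ and uniform bounds on $\|u\|$ and $\|u\|_{L^\infty(\Omega)}$ depending only on $n$. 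Continuity and compactness of $T$ on the closed ball of $L^p(\Omega)$ of appropriate radius (using the compact embedding from Lemma \ref{embedding} with $p_s<p$) allow Schauder's fixed point theorem to produce a fixed point $v_n$, which is the required solution.

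For parts (B) and (C), I would test the difference of the equations for $v_n$ and $v_{n+1}$ against $(v_n-v_{n+1})^+$. On the set $\{v_n>v_{n+1}\}$ one has $v_n+\tfrac{1}{n}>v_{n+1}+\tfrac{1}{n+1}$, whence $e^{1/(v_n+1/n)}\leq e^{1/(v_{n+1}+1/(n+1))}$; combined with $h_n\leq h_{n+1}$, the right-hand side becomes non-positive, so Lemma \ref{alg} applied to the left-hand side yields $\int_\Omega \mathcal{F}(\nabla(v_n-v_{n+1})^+)^p w\,dx\leq 0$ and hence $v_{n+1}\geq v_n$. Uniqueness follows by the same computation with $(v_n-\tilde v_n)^+$ (and its reverse) using monotonicity of $t\mapsto e^{1/(t+1/n)}$. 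The uniform local positivity in (C) then follows by the monotonicity $v_n\geq v_1$ and the fact that $v_1$, being a solution of \eqref{eapprox} for $n=1$ with an $L^\infty$ right-hand side, is locally bounded below by a positive constant on compact subsets via Lemma \ref{auxresult}.

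The main obstacle is part (D): the exponential factor becomes unbounded in $n$ on the set where $v_n$ approaches zero. The key observation to bypass this is that whenever $v_n\geq 1$, the exponential satisfies $e^{1/(v_n+1/n)}\leq e$, independently of $n$. I would therefore run a Stampacchia truncation of the same type as in Lemma \ref{essbdd}: fixing $k\geq 1$ and taking $\phi_k=(v_n-k)^+$ as a test function in \eqref{eapprox}, using \eqref{lbd} and the bound $e^{1/(v_n+1/n)}\leq e$ on $A(k):=\{v_n\geq k\}$, one gets
$$\|\phi_k\|^p\leq e\int_{A(k)}h(x)\phi_k\,dx.$$
Then exactly as in the proof of Lemma \ref{essbdd}, Hölder's inequality with the exponents $p_s^{*'}$ (respectively $r'$ or $1$ in the borderline cases), the embedding of Lemma \ref{embedding}, and the hypothesis on $t$ produce
$$|A(h)|\leq\frac{C}{(h-k)^{p_s^*}}|A(k)|^\alpha,\quad 1\leq k<h,$$
with $\alpha>1$ and $C$ independent of $n$; an application of \cite[Lemma B.1]{Stam} then yields $\|v_n\|_{L^\infty(\Omega)}\leq C$ uniformly in $n$. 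The same argument, relying on Lemma \ref{AI} in place of Lemma \ref{alg} and Remark \ref{auxrmk} in place of Lemma \ref{auxresult}, will in turn give the analogue for any $1<p<\infty$ when $\mathcal{F}_{p,w}=\Delta_{p,w}$ or $\mathcal{S}_{p,w}$, which is the content of Theorem \ref{ethmnew}.
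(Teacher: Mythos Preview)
Your proposal is correct and follows essentially the same route as the paper: parts (A)--(C) are handled by the Schauder fixed-point scheme and monotonicity/comparison arguments parallel to Lemma~\ref{exisapprox}, and part (D) is obtained by the Stampacchia truncation of Lemma~\ref{essbdd}, the crucial point being that on $\{v_n\geq k\}$ with $k\geq 1$ the exponential factor is bounded by $e$ uniformly in $n$. The paper's own proof is terser---it simply refers back to Lemmas~\ref{exisapprox} and~\ref{essbdd}---but the underlying argument is identical to what you have written out.
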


\begin{proof}
\begin{enumerate}
\item[(A)] Let us fix $n\in\mathbb{N}$, $v\in L^p(\Om)$ and denote by
$$
H_n=h_n e^\frac{1}{\big(v^++\frac{1}{n}\big)}.
$$
Then, noting the fact that
$$
H_n\in L^\infty(\Om)\text{ and }H_n {v_n}_-\leq 0,
$$
the existence of a positive solution $v_n\in W_0^{1,p}(\Om,w)$ of the problem \eqref{eapprox} follows by the same arguments as in the proof of Lemma \ref{exisapprox}. 
\item[(B)] Observing that
$$
(H_n-H_{n+1})(v_n-v_{n+1})^+\leq 0,
$$
and then proceeding similarly as in the proof of Lemma \ref{exisapprox}, we obtain the monotonicity of $v_n$. The uniqueness is analogous. 

\item[(C)] We observe that $v_n\neq 0$ in $\Om$ and note that $v_{n}\geq v_1$ from $(B)$. Then proceeding analogous to the proof of Lemma \ref{exisapprox}, for every $\omega\Subset\Om$, we have $v_n\geq c_{\omega}>0$ for some constant $c_{\omega}>0$ that is independent of $n$.

\item[(D)] Now the uniform boundedness of $\{v_n\}$ in $L^\infty(\Om)$ follows analogously in the proof of Lemma \ref{essbdd}.
\end{enumerate}
\end{proof}

\begin{Remark}\label{ermk}
If $\mathcal{F}_{p,w}=\Delta_{p,w}$ or $\mathcal{S}_{p,w}$ as given by \eqref{exlap}, noting Lemma \ref{AI} and following the exact arguments in the proof above, Lemma \ref{eexisapprox} holds for any $1<p<\infty$. 
\end{Remark}
\section{Proof of the main results}
\textbf{Proof of Theorem \ref{thm1}:}
\begin{enumerate}
\item[(a)] Let $u,v\in W_0^{1,p}(\Om,w)$ be weak solutions of $\mathcal{(S)}$. Then arguing similarly as in the proof of \cite[Lemma $2.13$]{BGM}, we choose $\phi=(u-v)^{+}\in W_0^{1,p}(\Om,w)$ as a test function in \eqref{weakform1} and obtain
\begin{equation}\label{uni1}
\int_{\Om}w(x)\mathcal{F}(\nabla u)^{p-1}\nabla_{\eta}\mathcal{F}(\nabla u)\nabla(u-v)^+\,dx=\int_{\Om}\Big(\frac{f}{u^\delta}+\frac{g}{u^\gamma}\Big)(u-v)^+\,dx,
\end{equation}
\begin{equation}\label{uni2}
\int_{\Om}w(x)\mathcal{F}(\nabla v)^{p-1}\nabla_{\eta}\mathcal{F}(\nabla v)\nabla(u-v)^+\,dx=\int_{\Om}\Big(\frac{f}{v^\delta}+\frac{g}{v^\gamma}\Big)(u-v)^+\,dx.
\end{equation}
Subtracting \eqref{uni1} and \eqref{uni2}, we have
\begin{align*}
&\int_{\Om}w(x)\{\mathcal{F}(\nabla u)^{p-1}\nabla_{\eta}\mathcal{F}(\nabla u)-\mathcal{F}(\nabla v)^{p-1}\nabla_{\eta}\mathcal{F}(\nabla v)\}\nabla(u-v)^+\,dx\\
&=\int_{\Omega}\Big(\big(\frac{1}{u^{\delta}}-\frac{1}{v^{\delta}}\big)f+\big(\frac{1}{u^{\gamma}}-\frac{1}{v^\gamma}\big)g\Big)(u-v)^{+}\,dx\leq 0.
\end{align*}
Thus, applying Lemma \ref{alg}, we obtain 
$$
\int_{\Om}\mathcal{F}\big(\nabla(u-v)^+\big)^p w\,dx=0,
$$
which gives $u\leq v$ in $\Omega$. In a similar way, we have $v\leq u$ in $\Omega$. Hence the result follows.

\item[(b)] By Lemma \ref{exisapprox}, for every $n\in\mathbb{N}$, there exists $u_{n}\in W_0^{1,p}(\Om,w)$ such that
\begin{equation}\label{exislim1}
\int_{\Om}w(x)\mathcal{F}(\nabla u_n)^{p-1}\nabla_{\eta}\mathcal{F}(\nabla u_n)\nabla\phi\,dx=\int_{\Omega}\Big(\frac{f_{n}}{u_{n}^{\delta}}+\frac{g_{n}}{u_{n}^{\gamma}}\Big)\phi\,dx,\quad\forall\,\phi\in C_c^{1}(\Om).
\end{equation}
\subsection*{Passing to the limit} By the strong convergence property $(c)$ in Lemma \ref{lemma1} along with Lemma \ref{embedding}, upto a subsequence, we have $\nabla u_{n}\to \nabla u_{\delta,\gamma}$ pointwise almost everywhere in $\Omega$. Therefore, for every $\phi\in C_c^{1}(\Omega)$, it holds that
\begin{equation}\label{exislim4}
\begin{split}
\lim_{n\to\infty}\int_{\Omega}w\mathcal{F}(\nabla u_n)^{p-1}\nabla_{\eta}\mathcal{F}(\nabla u_n)\nabla\phi\,dx&=\int_{\Omega}w(x)\mathcal{F}(\nabla u_{\delta,\gamma})^{p-1}\nabla_{\eta}\mathcal{F}(\nabla u_{\delta,\gamma})\nabla\phi\,dx.
\end{split}
\end{equation}

Denote by $\mathrm{supp}\,\phi=\omega\Subset \Om$ and thus by Lemma \ref{exisapprox}, there exists a constant $c_{\omega}>0$ that is independent of $n$ such that $u_n\geq c_{\omega}>0$ in $\omega$. Hence, we get 
\begin{equation*}\label{exislim2}
\Big(\frac{f_{n}}{u_{n}^{\delta}}+\frac{g_{n}}{u_{n}^{\gamma}}\Big)\phi\leq\|\phi\|_{L^{\infty}(\Omega)}\Big(\frac{f}{c_{\omega}^{\delta}}+\frac{g}{c_{\omega}^{\gamma}}\Big)\in L^1(\Omega).
\end{equation*}
Since $u_n\to u_{\delta,\gamma}$ pointwise alomost everywhere in $\Omega$, as an application of the Lebesgue dominated convergence theorem, we deduce that
\begin{equation}\label{exislim3} \lim_{n\to\infty}\int_{\Omega}\Big(\frac{f_{n}}{u_{n}^{\delta}}+\frac{g_{n}}{u_{n}^{\gamma}}\Big)\phi\,dx=\int_{\Omega}\Big(\frac{f}{u_{\delta,\gamma}^{\delta}}+\frac{g}{u_{\delta,\gamma}^{\gamma}}\Big)\phi\,dx.
\end{equation}
Combining the estimates \eqref{exislim4} and \eqref{exislim3} in \eqref{exislim1}, the result follows. \qed
\end{enumerate} \qed\\
\textbf{Proof of Theorem \ref{regthm1}:} The proof follows from Lemma \ref{essbdd}. \qed\\
\textbf{Proof of Theorem \ref{thm1new}:} Noting Lemma \ref{AI} and Remark \ref{anisormk}, following the lines of the proof of Theorem \ref{thm1}-\ref{regthm1}, the result follows.\\
\textbf{Proof of Theorem \ref{thm2}:}
\begin{enumerate}
\item[(a)] Let us set $$
S_{\delta}:=\Bigg\{v\in W_0^{1,p}(\Om,w):\int_{\Omega}|v|^{1-\delta}f\,dx=1\Bigg\}.
$$
Then it is enough to prove that
\begin{align*}
\mu(\Omega)&:=\inf_{v\in S_{\delta}}\|v\|^{p}=\|u_{\delta}\|^{\frac{p(1-\delta-p)}{1-\delta}}.
\end{align*}
It is easy to verify that $V_{\delta}=\zeta_{\delta} u_{\delta}\in S_{\delta}$, where
$$
\zeta_{\delta}=\Bigg(\int_{\Omega}u_{\delta}^{1-\delta}f\,dx\Bigg)^{-\frac{1}{1-\delta}}.
$$
Following the proof of \cite[Lemma $2.13$]{BGM}, we choose $\phi=u_{\delta}\in W_0^{1,p}(\Om,w)$ as a test function in $\eqref{weakform1}$ and noting the property \eqref{lbd}, we have
\begin{equation}\label{useful1}
\begin{split}
\int_{\Om}\mathcal{F}(\nabla u_\delta)^p w\,dx=\|u_{\delta}\|^p=\int_{\Omega}u_{\delta}^{1-\delta}f\,dx.
\end{split}
\end{equation}
First, using the homogeneity property $(H2)$ and then, by \eqref{useful1}, we have
\begin{equation}\label{extremal}
\begin{split}
\|V_{\delta}\|^{p}&=\int_{\Omega}\mathcal{F}(\nabla V_\delta)^pw\,dx=\zeta_\delta^p\int_{\Omega}\mathcal{F}(\nabla u_\delta)^pw\,dx=\|u_{\delta}\|^\frac{p(1-\delta-p)}{1-\delta}.
\end{split}
\end{equation}
Let $v\in S_{\delta}$ and define by $\mu=\|v\|^{-\frac{p}{p+\delta-1}}$. Then by Lemma \ref{lemma1}, since $u_{\delta}$ minimizes the functional $I_{\delta,\gamma}$ given by \eqref{eng}, we have 
\begin{equation}\label{m1}
I_{\delta,\gamma}(u_\delta)\leq I_{\delta,\gamma}(\mu|v|).
\end{equation}
Using \eqref{useful1}, we have
\begin{equation}\label{m2}
\begin{split}
I_{\delta,\gamma}(u_\delta)=\frac{1}{p}\|u_\delta\|^p-\frac{1}{1-\delta}\int_{\Omega}u_\delta^{1-\delta}f\,dx=\Big(\frac{1}{p}-\frac{1}{1-\delta}\Big)\|u_{\delta}\|^{p}.
\end{split}
\end{equation}
On the other hand, since $v\in S_\delta$, we have
\begin{equation}\label{m3}
\begin{split}
I_{\delta,\gamma}(\mu|v|)&=\frac{\mu^{p}}{p}\||v|\|^{p}-\frac{\mu^{1-\delta}}{1-\delta}\leq\frac{\mu^{p}}{p}\|v\|^{p}-\frac{\mu^{1-\delta}}{1-\delta}=\Big(\frac{1}{p}-\frac{1}{1-\delta}\Big)\|v\|^\frac{p(\delta-1)}{\delta+p-1}.
\end{split}
\end{equation}
Since $v\in S_{\delta}$ is arbitrary, using \eqref{m2} and \eqref{m3} in \eqref{m1}, we arrive at
\begin{equation}\label{m4}
\|u_{\delta}\|^\frac{p(1-\delta-p)}{1-\delta}\leq \inf_{v\in S_\delta}\|v\|^{p}.
\end{equation}
 Using \eqref{extremal} and \eqref{m4}, we obtain 
\begin{equation}\label{infprop}
\|V_\delta\|^p=\|u_{\delta}\|^\frac{p(1-\delta-p)}{1-\delta}\leq\inf_{v\in S_{\delta}}\|v\|^{p}.
\end{equation}
Since $V_{\delta}\in S_{\delta}$, from \eqref{infprop}, the result follows.
\item[(b)] Let \eqref{inequality1} holds. If $C>\mu(\Omega)$, then from $(a)$ above and \eqref{extremal}, we obtain
\begin{equation}\label{con1}
C\Big(\int_{\Omega}V_\delta^{1-\delta}f\,dx\Big)^\frac{p}{1-\delta}>\int_{\Om}\mathcal{F}(\nabla V_\delta)^p w\,dx.
\end{equation}
Since $V_\delta\in W_0^{1,p}(\Om,w)$, \eqref{con1} violates the hypothesis \eqref{inequality1}. Conversely, assume that
$$
C\leq \mu(\Omega)=\inf_{v\in S_{\delta}}\|v\|^{p}\leq \|V\|^{p},
$$ for all $V\in S_{\delta}$. We observe that the claim directly follows if $v=0$. So we deal with the case when $v\in W_0^{1,p}(\Om,w)\setminus\{0\}$ which gives
$$
V=\Bigg(\int_{\Omega}|v|^{1-\delta}f\,dx\Bigg)^{-\frac{1}{1-\delta}}v\in S_{\delta}.
$$
Therefore, we have
$$
C\leq\Bigg(\int_{\Omega}|v|^{1-\delta}f\,dx\Bigg)^{-\frac{p}{1-\delta}}\|v\|^p. 
$$
Hence, the result follows. \qed
\end{enumerate}  
\textbf{Proof of Theorem \ref{thm2new}:} Noting Lemma \ref{AI} and Remark \ref{anisormk}, following the lines of the proof of Theorem \ref{thm2}, the result follows. \qed

\textbf{Proof of Theorem \ref{ethm}:}
Let $\Om=\cup_{l}\Om_l$ where $\Om_l\Subset\Om_{l+1}$ are open subsets for each $l$. By Lemma \ref{eexisapprox}, we have $v_n\in L^\infty(\Om)$ and $c_l=\inf_{\Om_l}v_n>0$. Then choosing $\phi=(v_n-c_1)^+$ as a test function in \eqref{eapprox} and using \eqref{lbd}, we obtain
\begin{equation}\label{etst1}
\begin{split}
\|(v_n-c_1)^+\|^p&=\int_{\Om}w(x)\mathcal{F}(\nabla v_n)^{p-1}\nabla_{\eta}\mathcal{F}(\nabla v_n)\nabla(v_n-c_1)^+\,dx\\
&=\int_{\Om}h_n e^\frac{1}{\big(v_n+\frac{1}{n}\big)}(v_n-c_1)^+\,dx\\
&\leq e^\frac{1}{c_1}\|h\|_{L^1(\Om)}\|(v_n-c_1)^+\|_{L^\infty(\Om)}\\
&\leq ce^\frac{1}{c_1}\|h\|_{L^1(\Om)}\|(v_n-c_1)^+\|, 
\end{split}
\end{equation}
for some constant $c>0$ independent of $n$, where in the final step above, we have used the embedding result Lemma \ref{embedding}. Therefore, we obtain from \eqref{etst1} that $\{v_n\}$ is uniformly bounded in $W^{1,p}(\Om_1,w)$ and thus by Lemma \ref{embedding}, there exists $v_{\Om_1}\in W^{1,p}(\Om_1,w)$ such that upto a subsequence $\{v_{n_k^{1}}\}$ converges weakly in $W^{1,p}(\Om_1,w)$, strongly in $L^p(\Om_1)$ and almost everywhere in $\Om_1$ to $v_{\Om_1}$, say. Proceeding by induction argument, for every $l$, upto a subsequence $\{v_{n_k^{l}}\}$ of $\{v_n\}$, there exists $v_{\Om_l}$ such that $\{v_{n_k^{l}}\}$ converges weakly in $W^{1,p}(\Om_l,w)$, strongly in $L^{p}(\Om_l)$ and almost everywhere in $\Om_l$ to $v_{\Om_l}$. Let $\{v_{n_k^{l+1}}\}$ be a subsequence of $\{v_{n_k^{l}}\}$ for every $l$, where $n_l^{l}\to\infty$ as $l\to\infty$. Hence, $v_{\Om_{l+1}}=v_{\Om_l}$ in $\Om_l$ and we define by $v=v_{\Om_1}$ in $\Om_1$, $v=v_{\Om_{l+1}}$ in $\Om_{l+1}\setminus\Om_l$ for each $l$. Therefore, $v\in W^{1,p}_{\mathrm{loc}}(\Om,w)$ and also lie in $L^\infty(\Om)$ due to the property $(D)$ from Lemma \ref{eexisapprox}. As a consequence of our definition, the diagonal subsequence $\{v_{n_l}\}:=\{v_{n_l}^l\}$ satisfies
\begin{equation}\label{ecgnce2}
\begin{split}
v_{n_l}&\to v\text{ in }W^{1,p}_{\mathrm{loc}}(\Om_l,w),\\
v_{n_l}&\to v\text{ in }L^{p}_{\mathrm{loc}}(\Om_l),\\
v_{n_l}&\to v\text{ almost everywhere in }\Om_l.
\end{split}
\end{equation}
We claim that $\{v_{n_l}\}$ converges strongly to $v$ in $W^{1,p}_{\mathrm{loc}}(\Om_l,w)$. To see this, let $\omega\Subset\Om$ and $\phi\in C_c^{\infty}(\Om)$ be such that $0\leq\phi\leq 1$ in $\Om$, $\phi\equiv 1$ in $\omega$ and suppose $l_1\geq 1$ such that $\Om':=\mathrm{supp}\,\phi\subset\Om_{l_1}$. Then, by a direct computation for every $l,m\geq 1$, we have
\begin{equation}\label{estlim}
\begin{split}
&\int_{\omega}w(x)\{\mathcal{F}(\nabla v_{n_l})^{p-1}\nabla_{\eta}\mathcal{F}(\nabla v_{n_l})-\mathcal{F}(\nabla v_{n_m})^{p-1}\nabla_{\eta}\mathcal{F}(\nabla v_{n_m})\}\nabla(v_{n_l}-v_{n_m})\,dx\\
&\leq\int_{\Om}w(x)\{\mathcal{F}(\nabla v_{n_l})^{p-1}\nabla_{\eta}\mathcal{F}(\nabla v_{n_l})-\mathcal{F}(\nabla v_{n_m})^{p-1}\nabla_{\eta}\mathcal{F}(\nabla v_{n_m})\}\nabla\big(\phi(v_{n_l}-v_{n_m})\big)\,dx\\
&\quad-\int_{\Om_{l_1}}w(x)(v_{n_l}-v_{n_m})\{\mathcal{F}(\nabla v_{n_l})^{p-1}\nabla_{\eta}\mathcal{F}(\nabla v_{n_l})-\mathcal{F}(\nabla v_{n_m})^{p-1}\nabla_{\eta}\mathcal{F}(\nabla v_{n_m})\}\nabla\phi\,dx\\
&:=I-J.
\end{split}
\end{equation}
\textbf{Estimate of $I$:} We choose $\phi(v_{n_l}-v_{n_m})$ as a test function in \eqref{eapprox} and obtain for $j=l,m$ that
\begin{align*}
&\int_{\Om}w(x)\mathcal{F}(\nabla v_{n_j})^{p-1}\nabla_{\eta}\mathcal{F}(\nabla v_{n_j})\nabla\big(\phi(v_{n_l}-v_{n_m})\big)\,dx\\
&\leq\int_{\Om'}h_n e^\frac{1}{\big(v_{n_j}+\frac{1}{j}\big)}|v_{n_l}-v_{n_m}|\,dx\\
&\leq c\|h\|_{L^1(\Om)}\|v_{n_l}-v_{n_m}\|_{L^\infty(\Om')}\\
&\leq c\|h\|_{L^1(\Om)}\|v_{n_l}-v_{n_m}\|_{L^p(\Om')},
\end{align*}
for a constant $c$ independent of $l,m$. Therefore, using \eqref{ecgnce2}, the last quantity in the above estimate goes to zero as $l,m\to\infty$. As a consequence, we arrive at 
$$
I=\int_{\Om}w(x)\{\mathcal{F}(\nabla v_{n_l})^{p-1}\nabla_{\eta}\mathcal{F}(\nabla v_{n_l})-\mathcal{F}(\nabla v_{n_m})^{p-1}\nabla_{\eta}\mathcal{F}(\nabla v_{n_m})\}\nabla\big(\phi(v_{n_l}-v_{n_m})\big)\,dx\to 0
$$
as $l,m\to\infty$.\\
\textbf{Estimate of $J$:} Using \eqref{ubd} and H\"older's inequality, we get
\begin{align*}
&\int_{\Om_{l_1}}w(x)(v_{n_l}-v_{n_m})\mathcal{F}(\nabla v_{n_l})^{p-1}\nabla_{\eta}\mathcal{F}(\nabla v_{n_l})\nabla\phi\,dx\\
&\leq C_2\|\nabla\phi\|_{L^\infty(\Om)}\left(\int_{\Om'}w|\nabla v_{n_l}|^p\,dx\right)^\frac{p-1}{p}\left(\int_{\Om'}w|v_{n_l}-v_{n_m}|^p\,dx\right)^\frac{1}{p}.
\end{align*}
Noting \eqref{ecgnce2} and \cite[Theorem $2.14$]{Mikko}, the last quantity in the above estimate goes to zero as $l,m\to\infty$. Therefore, 
$$
J=\int_{\Om_{l_1}}w(x)(v_{n_l}-v_{n_m})\{\mathcal{F}(\nabla v_{n_l})^{p-1}\nabla_{\eta}\mathcal{F}(\nabla v_{n_l})-\mathcal{F}(\nabla v_{n_m})^{p-1}\nabla_{\eta}\mathcal{F}(\nabla v_{n_m})\}\nabla\phi\,dx\to 0
$$
as $l,m\to\infty$. Using the above estimates on $I$ and $J$, applying the Finsler algebraic inequality from Lemma \ref{alg} in \eqref{estlim}, the sequence $\{v_{n_l}\}$ converges strongly to $v$ in $W^{1,p}(\omega,w)$. Now we pass to the limit in \eqref{eapprox} and prove that $v$ is our required solution. To this end, assume that $\phi\in C_c^{1}(\Om)$ such that $\mathrm{supp}\,\phi\subset\Om_{l_1}$ for some $l_1\geq 1$. Then by the strong convergence of $v_{n_l}$ to $v$ in $W^{1,p}_{\mathrm{loc}}(\Om,w)$, we have
\begin{equation}\label{eplim1}
\lim_{l\to\infty}\int_{\Om}w(x)\mathcal{F}(\nabla v_{n_l})^{p-1}\nabla_{\eta}\mathcal{F}(\nabla v_{n_l})\nabla\phi\,dx=\int_{\Om}w(x)\mathcal{F}(\nabla v)^{p-1}\nabla_{\eta}\mathcal{F}(\nabla v)\nabla\phi\,dx.
\end{equation}
Moreover, by Lemma \ref{eexisapprox}, we obtain
$$
\left|h_{n_l} e^\frac{1}{\big(v_{n_l}+\frac{1}{n_l}\big)}\phi\right|\leq c h\in {L^1(\Om)},
$$
for some constant $c>0$ independent of $l$. By the Lebesgue dominated convergence theorem, we have
\begin{equation}\label{eplim2}
\lim_{l\to\infty}\int_{\Om}h_n(x) e^\frac{1}{\big(v_{n_l}(x)+\frac{1}{n_l}\big)}\phi(x)\,dx=\int_{\Om}h(x) e^\frac{1}{v(x)}\phi(x)\,dx.
\end{equation}
Thus, from \eqref{eplim1} and \eqref{eplim2}, we conclude that $v\in W^{1,p}_{loc}(\Om,w)$ is a weak solution of the problem $\mathcal{(R)}$. Further, it can be easily seen that $\{(v_{n_l}-\epsilon)^+\}$ is uniformly bounded in $W_0^{1,p}(\Om,w)$ for every $\epsilon>0$ and hence $(v-\epsilon)^+\in W_0^{1,p}(\Om,w)$. Since by Lemma \ref{eexisapprox}, we have $v_{n_l}\geq c_{\omega}>0$ for every $\omega\Subset\Om$, we obtain $v\geq c_{\omega}>0$ in $\omega$. Hence $v>0$ in $\Om$ and the result follows. \qed

\textbf{Proof of Theorem \ref{ethmnew}:} Noting Lemma \ref{AI} along with Remark \ref{ermk} and then following the same proof of Theorem \ref{ethm}, the result follows. \qed

\end{document}